\documentclass[12pt]{article}
\usepackage[utf8]{inputenc}
\textheight = 23.0cm
\textwidth = 16.0cm
\topmargin = -1.0cm
\oddsidemargin = 0cm
\baselineskip = 18 true pt
\usepackage{amsmath,amssymb}
\usepackage{times}
\usepackage{array}
\usepackage[english]{babel}
\usepackage{wrapfig} 
\usepackage{amssymb,amsthm,color}
\usepackage[active]{srcltx}
\input epsf
\usepackage{graphicx}
\usepackage{hyperref,pdfsync}
\usepackage{changes}
\definecolor{wineRed}{rgb}{0.7,0,0.3}

\theoremstyle{definition}
\newtheorem{theorem}{Theorem}[section]
\newtheorem{defn}[theorem]{Definition}
\newtheorem{lemma}[theorem]{Lemma}
\newtheorem{proposition}[theorem]{Proposition}
\newtheorem{cor}[theorem]{Corollary}

\newtheorem{remark}[theorem]{Remark}

\def\conv{{\hbox{conv}\,}}

\def\diam{\hbox{diam}\,}
\def\dist{\hbox{dist}\,}

\def\supp{\hbox{supp}\,}
\def\Int{\hbox{int}\,}
\def\bN{\mathbb{N}}
\def\bR{\mathbb{R}}

\def\bX{\mathbb{X}}
\def\bV{\mathbb{V}}

\def\d{\partial}\def\p{\partial}

\def\ep{\epsilon}

\def\cH{{\mathcal{H}}}
\def\cI{{\mathcal{I}}}
\def\cK{{\mathcal{K}}}

\def\cN{{\mathcal{N}}}

\numberwithin{equation}{section}
\newcommand{\subjclass}[1]{\bigskip\noindent\emph{2010 Mathematics Subject Classification:}\enspace#1}
\newcommand{\keywords}[1]{\noindent\emph{Keywords:}\enspace#1}

\begin{document}
\title{The planar Least Gradient problem in convex domains: the discontinuous case}
\author{Piotr Rybka\footnote{Faculty of Mathematics, Informatics and  Mechanics, the University of Warsaw,
ul. Banacha 2, 02-097 Warsaw, Poland, email: rybka@mimuw.edu.pl}, Ahmad Sabra\footnote{Department of Mathematics, American University of Beirut, Riad el solh, 1107 2020 Beirut, Lebanon. email: asabra@aub.edu.lb}}
\date{}

\maketitle
\begin{abstract}
We study the two dimensional least gradient problem in  convex polygonal sets in the plane, $\Omega$. We show  the existence of solutions when the boundary data $f$ are attained in the trace sense. The main difficulty here is  a possible discontinuity of $f$. Moreover, due to the lack of strict convexity of $\Omega$, the classical results are not applicable. 
We state the admissibility conditions on the boundary datum  $f$, 
that are sufficient for establishing an existence result. One of them is that $f\in BV(\d\Omega)$. The solutions are constructed  by a limiting process, which uses solutions to known problems.


 \subjclass{Primary: 49J10, Secondary: 49J52, 49Q10, 49Q20}

 \keywords{least gradient, trace solutions, convex  but not strictly convex domains, $BV$ functions.}
 
\end{abstract}

\section{Introduction}
The least gradient problem, in its isotropic and anisotropic versions, has recently attracted considerable attention, see \cite{Filippo}, \cite{gorny2}, \cite{jerrard}, \cite{korte}, \cite{zuniga}.
It may be stated as follows: for a given bounded region $\Omega\subset \bR^N$ with Lipschitz continuous boundary and a boundary datum $f\in L^1(\d\Omega)$, we seek $u\in BV(\Omega)$ which  attains the following  minimum,
\begin{equation}\label{lg}
 \min \left\{ \int_\Omega |D u|: \ u\in BV(\Omega),\ T u = f\right\},
\end{equation}
where 
$T: BV(\Omega) \to L^1(\partial\Omega)$ denotes the trace operator. 
There are various  motivations for studying (\ref{lg}). In one of the early papers,  the authors of \cite{parks} were interested in solutions to (\ref{lg}), because their level sets are minimal surfaces. A plastic design problem leads to an obstacle  least gradient problem, \cite{kohn}, where the solution $u$ has to satisfy a pointwise constraint, $u(x)\le \phi(x)$, where $\phi$ is given. 

More recently, a link between (\ref{lg}) and the Free Material Design or the minimal flow Beckman problem, see \cite{grs}, permitted to apply tools of the optimal transportation theory to the study of the least gradient problems as shown in \cite{Filippo} and \cite{dweik-gorny}.

%

A weighted least gradient problem appears in medical imaging, \cite{nachman}, and \cite{timonov}, which requires investigating the anisotropic version of \eqref{lg}, see \cite{jerrard}.  Namely, authors of these papers consider (\ref{lg}) with the integrand $|Du|$ replaced by $a(x) |Du|$, where $a$ is subject to some restrictions.

One of the first papers devoted to systematic studies of (\ref{lg}) was the article by Sternberg-Williams-Ziemer, \cite{sternberg}. It offered a geometric construction of solutions (\ref{lg}), because the direct method of the calculus of variations is not available due to the well-known lack of lower semicontinuity of the total variation over the set $\{u\in BV(\Omega): Tu = f\}$ for $f\in L^1(\d\Omega)$. The construction was performed for a restricted class of regions and continuous data. Namely, it is assumed in \cite{sternberg} that the boundary of the region $\Omega$ has a non-negative mean curvature (in a weak sense) and $\d\Omega$ is not locally area minimizing. In the anisotropic case studied in \cite{jerrard} and  \cite{moradifam},  $\Omega$ was supposed to satisfy a barrier condition that is equivalent to the conditions in \cite{sternberg} for the isotropic setting. However, in the case of regions in  $\bR^2$ these conditions 
reduce to the strict convexity of $\Omega$. We relax the strict convexity here. 
We will consider only polygonal regions  in this paper, as we did in \cite{RySa}. In other words, $\d\Omega$ consists of at most countable number of line segments, 

Sternberg-Williams-Ziemer also showed uniqueness of the solution to (\ref{lg}) provided that $f\in C(\d\Omega)$, see \cite{sternberg}. Using a weaker interpretation of the boundary conditions, the authors of \cite{mazon} proved the existence of solutions to a relaxed least gradient problem for general Lipschitz domain with $L^1$ boundary data, see \cite[Definition 2.3]{mazon}. Moreover, the example in \cite{mazon} shows that even a finite number of discontinuity points leads to the loss of  uniqueness of solutions.

We mention that the non-uniqueness could be tackled. The author of \cite{gorny2} provides a classification of multiple solutions. It is worth noticing that this result is valid for convex regions, which  need not be strictly convex.

It is interesting to ask if we can relax the continuity of the data in the existence theorems. Examples show, see \cite{spradlin}, that the space of traces of solutions to (\ref{lg}) is smaller than $L^1(\d\Omega)$, which the image of the trace operator $T$. It is known that $f\in BV(\d\Omega)$ or a.e. continuity  of $f$ suffices for the existence of solutions, at least in case of strictly convex $\Omega$, see \cite{Filippo}, \cite{gorny3}.

Here our objective comes. We want to study (\ref{lg}) in convex polygonal domains in the plain with data in $BV(\d\Omega)$.

In general, we do not expect 
existence of solutions to \eqref{lg}, even in the case of continuous $f$, once the strict convexity condition is dropped, see \cite[Theorem 3.8]{sternberg}. As a result,  we
have to develop a proper tool to examine the domain and the range of the data. Actually, we did it in \cite{RySa}, when the datum $f$ was continuous.  
In order to avoid unnecessary technical difficulties, we restricted our attention to convex bounded polygons $\Omega$ that have finite or infinite number of sides. 

In \cite{RySa} we  stated admissibility conditions. The first one, C1, means that $f$ restricted to a side is a monotone function. The other condition, C2, says that if $f$ achieves a maximum or minimum on a side of $\d\Omega$, then this must happen on a large set called a hump. More precisely, by a {\it hump} we mean an interval on which $f$ attains a local maximum or minimum, see Section \ref{adm} for details. 
Moreover, each hump must
have a ``companion" on a different side of $\d\Omega$. In this way we avoid accumulation of the level sets of solutions along a side, which is a common cause of nonexistence of solutions. 

Condition C2 had to be complemented with another restriction on $f$ ruling out
a category of bad data. For this reason, we  introduced in \cite{RySa} the Ordering Preservation Condition (OPC for short), see Definition \ref{dfopc}, and the Data Consistency Condition (DCC for short), see   Definition  \ref{dfdcc}.
Roughly speaking,
the OPC condition does not permit datum $f$, which leads to intersections of the level sets of the candidates for solutions. At the same time, the DCC 
says that, if $f$ attains a maximum on side $\ell_1$, then $f$ may  not  attain any minimum on side $\ell_2$ in front of  $\ell_1$. 

We presented in \cite{RySa} examples of data  showing  that dropping C1, C2, OPC or DCC leads to non-existence of solutions. 


Here, we have to state these restrictions in a way suitable for discontinuous data in $BV(\d\Omega)$, they will be called D1, see Definition \ref{admDC1} and D2, see Definition \ref{admDC2}. 
We also add one more admissibility condition D3, see Definition \ref{admDC3}, which prevents jumps at points which are strict local maxima or minima.



We may now state the results of this paper:

\begin{theorem}\label{tsuf-C} 
Let us suppose that $f\in BV(\d\Omega)$, choosing a representative that satisfies  (\ref{rd0}), 
$\Omega$ is an open, bounded and convex set.  The boundary of $\Omega$ is polygonal, i.e. it consists of at most a countable number of sides,  $\d \Omega = \bigcup_{j\in \mathcal{J}} \ell_j$, where $\ell_j$ are line segments. Furthermore,  $f$ satisfies the
admissibility conditions D1 or  D2 on all sides of $\d\Omega$,  and D3, the Ordering Preservation Condition (\ref{OPC}) and the Data Consistency Condition (\ref{dcc1}-\ref{dcc2}).\\
(a) If the number of sides  as well as the number of humps are finite, then there exists a solution to  problem (\ref{lg}). Moreover, the following estimate is valid for any solution $u$ of (\ref{lg}), 
\begin{equation}\label{rm-main}
\| u \|_{L^\infty} \le \| f\|_{L^\infty},\qquad
\int_\Omega |Du | \le \diam \Omega \, TV (f).
\end{equation}
(b) We assume that the number of sides is infinite and  they have
at most one accumulation point $p_0$. We also require  that the number of all humps of $f$ is finite
and $f$ has 
a local maximum/minimum at $p_0$. Moreover, there is $\epsilon>0$ such that the restriction of $f$ to each component of $(B(p_0,\epsilon)\cap\d\Omega)\setminus \{p_0\}$ is strictly monotone.
Under these assumptions the problem (\ref{lg}) has a solution, which satisfies (\ref{rm-main}).\\
(c) Suppose that only one side, $\ell$, has an infinite number of humps accumulating at its endpoint $p_0$. This point $p_0$ may be an accumulation point of sides of $\Omega$. 
Then, problem (\ref{lg}) has a solution, satisfying (\ref{rm-main}).
\end{theorem}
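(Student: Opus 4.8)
The plan is to reduce the discontinuous problem to the continuous one solved in \cite{RySa}: first regularize the datum $f$ into continuous data $f_n$ that still obey the admissibility conditions of \cite{RySa}, then solve \eqref{lg} for each $f_n$ by that result, and finally pass to the limit $n\to\infty$. The a priori bounds \eqref{rm-main} are obtained along the way. In fact the $L^\infty$ bound holds for \emph{every} solution $u$ by the coarea formula: truncating $u$ at the levels $\pm\|f\|_{L^\infty}$ leaves the trace unchanged (recall $f$ is bounded, being of bounded variation on the one-dimensional boundary) and does not increase the total variation, so minimality forces $P(\{u>t\})=0$ for a.e.\ $t$ with $|t|>\|f\|_{L^\infty}$, hence, $\Omega$ being connected, $\|u\|_{L^\infty}\le\|f\|_{L^\infty}$; and since all solutions of \eqref{lg} share the same total variation, it suffices to prove $\int_\Omega|Du|\le\diam\Omega\cdot TV(f)$ for the one solution we construct.

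\emph{Step 1: regularizing the datum.} I would build a sequence $(f_n)\subset C(\d\Omega)$ with $f_n\to f$ in $L^1(\d\Omega)$, $\|f_n\|_{L^\infty}\le\|f\|_{L^\infty}$ and $TV(f_n)\to TV(f)$, each $f_n$ satisfying the conditions C1, C2, OPC and DCC of \cite{RySa}, so that $f_n$ falls within the scope of that paper. This is precisely where D1--D3 enter: by D1 or D2 the restriction of $f$ to each side is monotone, so on each side one may replace the (at most countably many) jumps of $f$ by short monotone affine arcs whose total length tends to $0$; condition D3, forbidding jumps at strict local extrema, guarantees that this does not manufacture a spurious hump, so the humps of $f_n$ are in bijection with those of $f$, and the companion relation between humps, together with the OPC \eqref{OPC} and the DCC \eqref{dcc1}--\eqref{dcc2} of $f$, is inherited by $f_n$. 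In case (a) this is a finite construction. In case (b) one additionally uses the strict monotonicity of $f$ on each component of a punctured neighbourhood of the accumulation point $p_0$ to perform the construction uniformly near $p_0$; in case (c) one groups the infinitely many humps accumulating at $p_0$ so that only finitely many survive in $f_n$, with the length of the modified region near $p_0$ tending to $0$.

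\emph{Steps 2--4: solving, compactness, and the limit.} By the main theorem of \cite{RySa} each $f_n$ admits a solution $u_n\in BV(\Omega)$ of \eqref{lg} with $\|u_n\|_{L^\infty}\le\|f_n\|_{L^\infty}$ and $\int_\Omega|Du_n|\le\diam\Omega\cdot TV(f_n)$. These bounds make $(u_n)$ bounded in $BV(\Omega)$, so along a subsequence $u_n\to u$ in $L^1(\Omega)$ and a.e., with $u\in BV(\Omega)$; lower semicontinuity of the total variation together with $TV(f_n)\to TV(f)$ yields the bound in \eqref{rm-main}. It remains to check that $u$ solves \eqref{lg} with datum $f$, which splits into two points: (i) $u$ is a function of least gradient --- the standard fact that an $L^1$-limit of least gradient functions is of least gradient, via the characterization by area-minimizing level sets, which is stable under local convergence of sets of finite perimeter --- and (ii) $Tu=f$. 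For (ii) I would use the planar convex geometry: the level sets of each $u_n$ meeting $\d\Omega$ are chords of $\Omega$ which, by the OPC, do not cross, while the hump/companion structure and the DCC ensure that the chords needed to realize $f_n$ exist and land on the prescribed sides; passing to the limit, the level sets of $u$ are again pairwise non-crossing chords whose endpoints --- controlled uniformly in $n$ by the monotonicity of the data near $\d\Omega$ --- are read off to give $Tu=f$, with D3 guaranteeing that each jump of $f$ is carried across $\Omega$ by an honest chord rather than smeared along a side.

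The main obstacle is point (ii), the identification $Tu=f$: the trace operator is not continuous along $L^1$-convergent sequences, so one must upgrade $f_n\to f$ of boundary data to convergence of the bulk solutions in the trace sense, and this works only because the admissibility conditions were designed to keep the level sets of the $u_n$ from accumulating along a side --- the very phenomenon causing nonexistence, cf.\ \cite{sternberg} and \cite{mazon}. In parts (b) and (c) this additionally demands uniform control of the $u_n$ near $p_0$; part (c), with infinitely many humps piling up at $p_0$, is the most delicate, since the grouping of humps in Step 1 must be arranged so that the corresponding chords of the $u_n$ still converge and the limiting trace picks up every hump of $f$.
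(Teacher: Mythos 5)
Your overall plan for part (a) --- regularize $f$ into continuous admissible data, solve by \cite{RySa}, pass to the limit --- is the paper's plan, and your truncation argument for the $L^\infty$ bound and the observation that all minimizers share the same total variation are fine. But there is a genuine gap exactly where you flag ``the main obstacle'': you never actually close the step $Tu=f$. Saying that the level sets of the $u_n$ are non-crossing chords and that ``passing to the limit'' one can ``read off'' the trace is not an argument; under mere $L^1$ convergence level sets need not converge and the trace operator is not continuous. The paper's device, which your Step~1 omits, is to approximate $f$ \emph{monotonically from both sides}: an increasing sequence $g_n\le f$ preserving local minima and a decreasing sequence $h_n\ge f$ preserving local maxima (Lemma~\ref{Aprox1}), each verified to satisfy C1/C2, OPC and DCC. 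The comparison principle (Proposition~\ref{comparison}) then makes the corresponding solutions $v_n$ increasing and $u_n$ decreasing, and the sandwich $v_n\le v\le u\le u_n$ together with continuity of each $u_N$, $v_N$ forces $Tu=Tv=f$ at every continuity point of $f$, hence a.e. Your generic ``short monotone affine arcs at the jumps'' produce neither one-sided approximation nor any substitute mechanism, so the identification of the trace is unsupported.

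For parts (b) and (c) there is a second gap: the continuous-data existence theorem you invoke requires a boundary with finitely many sides and data with finitely many humps, so regularizing the datum alone cannot work --- in (b) the domain still has infinitely many sides, and in (c) your ``grouping'' of infinitely many humps into finitely many while preserving C2 (each hump needs companion points at the correct distances), OPC and DCC is asserted but not constructed. The paper instead truncates the \emph{domain}: it cuts $\Omega$ near $p_0$ along chords $L_n=[x_n,y_n]$ with $f(x_n)=f(y_n)$ (resp.\ $L_n=[a_n,y_n]$ attached to the $n$-th hump in case (c)), extends to a convex $\tilde\Omega_n$ with finitely many sides, sets the datum constant on the new piece of boundary, applies part (a) there, shows the solutions are continuous across $L_n$ so that $|Du_n|(\Omega)$ is increasing and bounded by $\diam\tilde\Omega_n\,TV(f)$, and deduces \emph{strict} convergence $u_n\to u$, which is what yields convergence of traces. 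Neither the domain truncation nor the strict-convergence mechanism appears in your proposal, so (b) and (c) are not reduced to anything you have proved. (A small further inaccuracy: under D2 the restriction of $f$ to a side carrying humps is not monotone, only its restriction to the components of the complement of the humps is.)
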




Let us make a few comments. We remark that estimate (\ref{rm-main}) is new also in the case of continuous data. However, its proof requires the restricted geometry of the data. 

Our method of proof is based on  successive  approximations. 
In fact, we approximate a given datum $f$ from below  and above by monotone sequences satisfying the assumptions of the existence result  \cite[Theorem 3.8]{RySa}. The monotonicity of the sequences implies their pointwise convergence. In a sense we construct an upper and a lower solution in part (a). The monotonicity is very helpful in establishing (\ref{rm-main}).

In the proof of part (b) we approximate not only the data but also the region $\Omega$. We do so in such a way that part (a) yields a sequence of approximate solutions. Estimate  (\ref{rm-main}) is very helpful in establishing convergence and the trace property, i.e. $Tu =f$. The approach to prove (c) is the same as in the case of (b), however, the details are a bit different.

Finally, we present the plan of the paper. In Section \ref{adm} we recall the admissibility conditions stated in \cite{RySa} and used to prove the existence results. We adapt these conditions to the case of discontinuous data. We also prove a comparison principle  and the estimate (\ref{rm-main}) for continuous data. Section \ref{sDdata} is devoted to the construction of solution by approximation processes. Here, we  carefully construct decreasing and increasing sequences of continuous functions on $\d\Omega$, which approximate the datum. The point is that these sequences satisfy the assumptions of the basic existence result in \cite[Theorem 3.8]{RySa}, when $\Omega$ is a polygon and $f$ has a finite number of humps, i.e. flat pieces of the graph of $f$. The existence of solutions in this case is shown in Theorem \ref{Main2}. These solutions are used to approximate situations when $\d\Omega$ has an infinite number of sides, but a finite number of humps, cf. Theorem \ref{td-nsk} or when the number of humps is infinite, see Theorem \ref{Main3}.

\section{Preliminary results}\label{adm}

In this paper we assume that $\Omega\subseteq \mathbb R^2$ is a bounded convex domain, whose boundary is the union of at most countable number of line segments. We denote the sides of $\partial \Omega$ by $\ell_j=[p^j_l,p^j_r]$ and write $$\partial \Omega=\bigcup_{j\in\mathcal J} \ell_j.$$

Given a function $f\in BV(\partial \Omega)$, we will always choose a representative of $f$ whose jumps are a bit restricted. In case of functions defined on the real line we would consider the so-called good representatives of $f$. Namely, we require that the following condition holds,
\begin{equation}\label{rd0}
    f(x) \in [\liminf_{z\to x} f(z),\limsup_{z\to x}f(z)]\qquad \hbox{for all }x\in \d\Omega.
\end{equation}
This will be our standing assumption. 
Having the above setting in mind, we define the following:

\begin{defn}\label{image set}
Given $f\in BV(\d\Omega)$, satisfying (\ref{rd0}),
we define
$$P(e)=\left\{x\in \partial \Omega: e\in [\liminf_{z\to x} f(z),\limsup_{z\to x}f(z)]\right\}.$$
We notice that if $f$ is continuous in $\d\Omega$, then $P(e)=f^{-1}(e)$.
\end{defn}

\begin{proposition}\label{prop:level set}
For $f\in BV(\partial\Omega)$, satisfying (\ref{rd0}), 
the set $P(e)$ is closed.
\end{proposition}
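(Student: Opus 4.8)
The plan is to show that the complement $\d\Omega \setminus P(e)$ is open, so fix a point $x_0 \in \d\Omega$ with $e \notin [\liminf_{z\to x_0} f(z), \limsup_{z\to x_0} f(z)]$. Then either $e > \limsup_{z\to x_0} f(z)$ or $e < \liminf_{z\to x_0} f(z)$; treat the first case, the second being symmetric. By definition of $\limsup$, there is $\delta > 0$ such that $\sup_{z \in B(x_0,\delta)\cap\d\Omega} f(z) < e$; more precisely, pick $\eta > 0$ with $\limsup_{z\to x_0} f(z) + \eta < e$, and then $\delta>0$ so that $f(z) < \limsup_{z\to x_0} f(z) + \eta$ for all $z \in (B(x_0,\delta)\cap\d\Omega)\setminus\{x_0\}$, while also $f(x_0) < e$ thanks to the standing assumption (\ref{rd0}), which forces $f(x_0) \le \limsup_{z\to x_0} f(z)$.

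Next I would verify that no point $y \in B(x_0,\delta/2)\cap\d\Omega$ lies in $P(e)$. For such a $y$, every point $z$ close enough to $y$ lies in $B(x_0,\delta)\cap\d\Omega$, hence $\limsup_{z\to y} f(z) \le \limsup_{z\to x_0} f(z) + \eta < e$. Combined with (\ref{rd0}) applied at $y$, which gives $f(y) \le \limsup_{z\to y} f(z)$, we get that $e$ is strictly above the whole interval $[\liminf_{z\to y} f(z), \limsup_{z\to y} f(z)]$, so $y \notin P(e)$. This shows $B(x_0,\delta/2)\cap\d\Omega \subseteq \d\Omega\setminus P(e)$, i.e. the complement is relatively open in $\d\Omega$, hence $P(e)$ is closed. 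One should also note the trivial cases: if $e$ lies outside the range of the relevant extended reals everywhere, $P(e)$ may be empty, which is vacuously closed.

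The only mild subtlety — and the place I would be most careful — is the interaction between the ambient topology of $\d\Omega$ (a union of possibly countably many segments, with possible accumulation points of sides) and the pointwise $\limsup$/$\liminf$: one must make sure that "$z \to x$" means convergence along $\d\Omega$ with its induced metric, so that $B(x_0,\delta)\cap\d\Omega$ is genuinely the relevant neighborhood and the estimate on $\limsup$ transfers from $x_0$ to nearby $y$. Once that bookkeeping is fixed, the argument is a routine $\varepsilon$–$\delta$ manipulation; there is no real obstacle beyond being precise about (\ref{rd0}) pinning the value $f(x_0)$ inside the cluster-value interval.
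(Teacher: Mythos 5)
Your proof is correct and follows essentially the same route as the paper: both show the complement of $P(e)$ is relatively open by fixing a margin ($\eta$, resp.\ $\delta$) between $e$ and the relevant cluster value, finding a ball on which $f$ stays uniformly on one side of $e$, and transferring the $\limsup$/$\liminf$ bound to nearby points $y$ (the paper writes out the $e<\liminf$ case, you the symmetric $e>\limsup$ case). The only cosmetic difference is your invocation of (\ref{rd0}) at $y$, which is harmless but unnecessary since membership in $P(e)$ depends only on the interval $[\liminf_{z\to y}f(z),\limsup_{z\to y}f(z)]$ and not on the value $f(y)$ itself.
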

\begin{proof}
We shall show that the complement of $P(e)$ is open. Let $x\notin P(e)$, that is $$
e\notin [\liminf_{z\to x} f(z), \limsup_{z\to x} f(z)].
$$ 
We will consider the case, when 
$$e<\liminf_{z\to x} f(z)=\sup_{\varepsilon>0}\left(\inf \{f(z):z\in (B(x,\varepsilon)\setminus \{x\})\cap \d\Omega\right\}).$$
The argument, when $e> \limsup_{z\to x} f(z)$ is similar.

By definition of $\liminf$, for a given $\delta>0$ there exists  $\varepsilon_0$ such that for every $z\in B(x,\varepsilon_0)$, $e<f(z)-\delta$.  Hence, $$e\leq \liminf_{z\to y} f(z)-\delta<\liminf_{z\to y} f(z)$$ for every $y\in B(x,\varepsilon_0)\cap \partial \Omega$, i.e. $B(x,\varepsilon_0)\cap P(e)=\emptyset$. We conclude that the complement of $P(e)$ is open.
\end{proof}
\begin{defn}\label{def:Hump}
For a given  $f\in BV(\partial \Omega)$, with each side $\ell$,
we associate  
a family of closed intervals $\{\bar I_i\}_{i\in \cI}$ such that $
\bar I_i =[a_i, b_i]\subsetneq \ell$ and $\bar I_i\cap \d\ell =\emptyset$. We assume that on each $I_i$, the function  $f$ is constant and attains a local maximum or minimum, and each  $I_i$ is maximal with this property. 
We call such $I_i$ a {\it hump}, and set $e_i = f(I_i)$, $i\in\cI$. We also adopt the following notation. Given a side $\ell=[p_l,p_r]$ in $\partial \Omega$,  and a hump $I$, $\bar I=[a,b]\subseteq \ell$, we order the vertices so that 
$$
|p_l - a|<|p_l - b|\quad\hbox{and}\quad|p_r - b|<|p_r - a|.
$$ 
In other words, by definition the endpoint $a$ of $I$ is closer to the `left' endpoint of side $\ell$. Of course, the last notion depends on the orientation on $\d\Omega$. 
To be precise, we require that the tangent vector $p_r - p_l$ agrees with the positive orientation of $\d\Omega.$
\end{defn}
Once for all we fix the positive orientation of $\Omega.$ After doing so, we may identify a connected neighborhood of any point $x_0\in\d\Omega$ with an open interval of the real line by an order preserving homeomorphism. Keeping this in mind we may sensibly write
$$
\lim_{x\to x_0^-} f(x) =: f(x_0^-),\qquad \lim_{x\to x_0^+} f(x) =: f(x_0^+).
$$

\subsection{Admissibility conditions}\label{subsec:adm condition}
We are now ready to present the conditions on $f$ and $\Omega$ that are sufficient for a proof of existence of 
solutions to the corresponding least gradient problem \eqref{lg}. Since, in this paper, the trace $f$ is not necessarily continuous, the admissibility conditions in \cite{RySa} need to be adjusted. 

\begin{defn}\label{admDC1} Let us suppose that $f:\d\Omega\to \bR$ is a function of bounded variation and $f$ restricted to  $\ell =[p_l,p_r]$ is
monotone. We shall say that $f$ satisfies the {\it admissibility condition} D1 if 
one of the following conditions holds:\\
(i)  $f$ is continuous at both endpoints of $\partial\ell$;\\
(ii) there is $\ep>0$ such that $f$ restricted to $\ell \cup (B(p_t,\ep)\cap \d\Omega)$ is monotone, where $t=l$, or $r$, and $f$  is continuous at $\d\ell\setminus \{p_t\}$;\\
(iii) there is $\ep>0$ such that $f$ restricted to 
$\ell \cup (\d\Omega \cap (B(p_l,\ep)  \cup B(p_r,\ep))$
is monotone. 
\end{defn}

In case $f$ has a hump on  $\ell$, a side of $\d\Omega$, we require the following D2 condition.
\begin{defn}\label{admDC2}
We say that  function $f\in BV(\d\Omega)$ satisfies the {\it admissibility condition} D2 on  side $\ell$ if and only if the following restrictions are in force:\\
(a) For each hump $I_i$ in $\ell$, with $\bar I_i=[a_i, b_i]$, the following inequality holds,
 \begin{equation}\label{DA}
 \dist(a_i, P(e_i)\cap(\d\Omega\setminus \bar I_i)) + \dist(b_i, P(e_i)\cap(\d\Omega\setminus \bar I_i)) < |a_i-b_i|.
\end{equation}
Due to Proposition \ref{prop:level set} there are points at which the distances above are attained. We then require in addition that if $y_i$, $z_i\in\d\Omega$ are such that 
\begin{equation}\label{ddfyz}
\dist(a_i, P(e_i)\cap(\partial\Omega\setminus \bar I_i)) = 
\dist(a_i, y_i),\quad 
\dist(b_i, P(e_i)\cap(\partial\Omega\setminus \bar I_i)) = \dist(b_i, z_i),
\end{equation}
then $y_i$, $z_i\in\d\Omega\setminus\ell.$\\
(b) If $J$ is  one of the components of $\ell \setminus \bigcup_i I_i$ such that $\partial J \cap \partial \ell \neq\emptyset$, then $f|_J$ satisfies D1 with $J$ in place of $\ell.$
\end{defn}

The admissibility conditions D1 and D2 deal with the behavior  of discontinuities  of $f$ 
at the vertices of 
$\partial \Omega$ and the hump endpoints. We have to restrict the behavior of $f$ at other discontinuity points. 
The following condition D3 serves this purpose. 

\begin{defn}\label{admDC3}
We say that $f\in BV(\d\Omega)$ satisfies the admissibility condition D3 on a side $\ell$ if and only if for every discontinuity point $x_0$ in the interior of $\ell$, there exists $\varepsilon>0$ such that $f$ restricted to $B(x_0,\varepsilon)\cap \ell$ is monotone.
\end{defn}
\begin{remark}\label{mono}
 We notice that D3 implies that if $J$ is a line segment contained in the complement of all humps, then $f$ must be monotone on $J$.
\end{remark}

\begin{remark}\label{rmk:C1,C2}
We notice that the cases when $f$ is continuous, the admissibility conditions D1 and D2 reduce to the admissibility conditions for continuous boundary conditions discussed in \cite{RySa}. Indeed, if 
$f$ is continuous on $\partial \Omega$ and satisfies conditions D1 on a side $\ell$, then $f$ is  monotone on $\ell$. This corresponds to condition C1 in \cite[Definitions 2.1]{RySa}. 
Notice also, that if $f$ is continuous, then for every $e\in f(\p\Omega)$, $P(e)=f^{-1}(e)$ and hence condition D2  corresponds to condition C2 in \cite[Definition 2.2]{RySa}.  Moreover, D3 is satisfied automatically.
\end{remark}

As in the continuous case, we require the following compatibility conditions OPC, and DCC otherwise solution to \eqref{lg} might fail to exist, 
see \cite{RySa}.

\begin{defn}\label{dfopc}
We shall say that $f\in BV(\d\Omega)$ satisfies the 
{\it order preserving condition}, (OPC),
if for any two different humps $I_1$, $I_2$, contained in two sides $\ell_1$, $\ell_2$, which may be equal, 
any choice of the corresponding 
points $y_i, z_i$, $i=1,2$ defined in (\ref{ddfyz}) fulfills
\begin{equation}\label{OPC}
 ([a_1,y_1] \cup [b_1, z_1]) \cap ([a_2,y_2] \cup [b_2, z_2]) = \emptyset.
\end{equation}
\end{defn}
The Order Preserving Condition rules out nonsense data, because level sets of solutions cannot cross, but by itself it is not sufficient. This is why we introduced another requirement, complementing (\ref{OPC}). In order to do so, we present a new piece of notation. 
\begin{defn}\label{darc}
For a given  hump $I$, with $\bar I=[a,b]$, and corresponding points $y, z$, see (\ref{ddfyz}), we let $\overline{yz}_{ab}\subset\partial\Omega$ to be the polygonal-arc connecting $y$ and $z$, and
not containing $[a,b]$.
\end{defn}
 For any $p\in \d\Omega$, $\epsilon>0$   we write,
$$
\cN(p,\epsilon)=B(p,\epsilon)\cap\partial\Omega\setminus\overline{yz}_{ab} .
$$

\begin{defn}\label{dfdcc}
We shall say that $f\in BV(\partial\Omega)$ satisfies the 
{\it data consistency condition}, (DCC for short), provided that  at all humps $I$, with $\bar I=[a,b]$, 
 there is a choice of points $y$, $z$ such that 
\begin{equation}\label{dcc1}
\inf_{x\in \overline{yz}_{ab}} f(x) \ge f(I),
\end{equation}
whenever $f$ attains a local maximum on hump $I$. Here, the points $y, z$ are defined in (\ref{ddfyz}) and  the arc
$\overline{yz}_{ab}\subset \p\Omega$ is defined above.
Moreover,  there is $\epsilon>0$ such that
\begin{equation}\label{dcc2}
\begin{array}{ll}
f(p_2) < f(p_1)& \hbox{if } p_1, p_2 \in \cN(z,\epsilon),\ \dist(p_2,z)<\dist(p_1,z),\\
f(p_2) < f(p_1)& \hbox{if }
p_1, p_2 \in \cN(y,\epsilon),\ \dist(p_2,y)<\dist(p_1,y).
\end{array}
\end{equation}
If $f$ attain a local minimum on hump $I$, then the inequalities in (\ref{dcc1}) and (\ref{dcc2}) are reversed.
\end{defn}

\subsection{Auxiliary results for solutions with continuous data}

It is shown in \cite[Theorem 3.8]{RySa}, that if $\partial \Omega$ has finitely many sides and if $f\in C(\partial \Omega)$ satisfies conditions C1, C2 on all sides of $\d\Omega$ as well as the OPC and DCC conditions and $f$ has finitely many humps, then a unique solution to \eqref{lg} exists.

The solution to the continuous case is constructed as follows. By \cite[Lemma 3.1]{RySa}, there is  a decreasing sequence of strictly convex domains $\Omega_n\subset \mathbb R^2$ and such that $\bar\Omega_n$ converges to $\bar\Omega$ in the Hausdorff measure. Moreover, 
the vertices of $\Omega$ belong to the boundary of $\Omega_n$.

Let $\pi: \mathbb R^2\mapsto \bar \Omega$ be the orthogonal projection onto the closed convex set $\bar\Omega$ and $\pi_n$ be its restriction to $\d \Omega_n$, and $f_n=f\circ\pi_n$. Due to \cite{sternberg}, there exists a unique solution $v_n$ to the least gradient problem \eqref{lg} on $\Omega_n$ with trace $f_n$. Moreover, the solutions $v_n$ are equicontinuous with modulus of continuity that depends only on $f$ and the number of sides of $\d\Omega$, see \cite[Lemma 3.6]{RySa}. After letting $u_n=v_n|_{\d\Omega}$, we showed that $u_n$ converges uniformly to a least gradient function $u$, the unique solution to \eqref{lg}.

Having in mind the above construction, we show the following comparison principle for solutions to the least gradient solution to \eqref{lg} in the case where $\Omega$ has finitely many sides, and $f\in C(\d\Omega)$.

\begin{proposition}\label{comparison}
Let $\Omega$ be convex and $\d\Omega$ is a polygon with finitely many sides. We assume that $f_1$ and $f_2$ belong to  $C(\partial \Omega)$ satisfying the admissibility conditions C1, C2, as well as the OPC, and DCC and such that $f_1\leq f_2$. Let $u_1$ and $u_2$ be the corresponding unique solutions to \eqref{lg}, 
then $u_1\leq u_2$ in $\Omega$.
\end{proposition}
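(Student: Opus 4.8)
The plan is to exploit the approximation construction recalled just before the proposition: each $u_i$ arises as the uniform limit on $\partial\Omega$ of $u_{i,n}=v_{i,n}|_{\partial\Omega}$, where $v_{i,n}$ is the unique least gradient solution on the strictly convex domain $\Omega_n$ with trace $f_{i,n}=f_i\circ\pi_n$. Since $f_1\le f_2$ on $\partial\Omega$ and $\pi_n$ is a well-defined map into $\partial\Omega$, we immediately get $f_{1,n}\le f_{2,n}$ on $\partial\Omega_n$. So the first reduction is: it suffices to prove the comparison $v_{1,n}\le v_{2,n}$ in $\Omega_n$ for each fixed $n$, because then passing to the limit (the $v_{i,n}$ converge, at least along a subsequence and in fact uniformly up to the boundary by the equicontinuity in \cite[Lemma 3.6]{RySa}, to $u_i$) preserves the inequality $u_1\le u_2$ in $\Omega$.

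Thus the heart of the matter is a comparison principle for least gradient solutions on a \emph{strictly convex} domain with continuous, ordered boundary data, which is the setting of Sternberg--Williams--Ziemer. Here I would invoke the structure theory from \cite{sternberg}: on a strictly convex domain the (unique) solution $v$ with continuous trace $g$ has the property that its superlevel sets $\{v>t\}$ and sublevel sets $\{v<t\}$ are sets of finite perimeter whose boundaries inside $\Omega_n$ are area-minimizing, i.e. line segments (chords) in the planar case, and these chords are pairwise non-crossing across levels. The clean way to run the argument is via the level-set / foliation description: for each value $t$ the boundary $\partial^*\{v_{i,n}>t\}\cap\Omega_n$ is a union of chords whose endpoints on $\partial\Omega_n$ are determined by $g_i$, and one shows that, because $g_1\le g_2$, the region $\{v_{1,n}>t\}$ is contained in $\{v_{2,n}>t\}$ for every $t$. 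Containment of all superlevel sets is exactly $v_{1,n}\le v_{2,n}$ a.e.

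To prove $\{v_{1,n}>t\}\subseteq\{v_{2,n}>t\}$ I would argue by contradiction using the minimality of the level sets. If the inclusion failed, the sets of finite perimeter $A=\{v_{1,n}>t\}$ and $B=\{v_{2,n}>t\}$ would have $A\setminus B$ of positive measure; one then compares $A\cup B$ and $A\cap B$ against $A$ and $B$ respectively. Since $\partial A\cap\Omega_n$ is area-minimizing with respect to its own boundary trace (and similarly for $B$), and since on $\partial\Omega_n$ the traces satisfy $\chi_A\le\chi_B$ (because $g_1\le g_2$ forces $\{g_1>t\}\subseteq\{g_2>t\}$ on the boundary), the standard ``cut and paste'' inequality
\[
\mathrm{Per}(A\cap B)+\mathrm{Per}(A\cup B)\le \mathrm{Per}(A)+\mathrm{Per}(B)
\]
together with minimality of each piece forces $\partial(A\cap B)$ and $\partial(A\cup B)$ to also be minimizers with the correct boundary data; uniqueness of the minimal set for given boundary trace on a strictly convex domain (again \cite{sternberg}, which gives uniqueness of $v_{i,n}$ and hence of almost every level set) then yields $A\cap B=A$ and $A\cup B=B$, i.e. $A\subseteq B$. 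One has to be slightly careful that this uniqueness applies at a.e.\ level $t$ and that the exceptional levels form a null set, which is standard from the coarea formula and BV theory.

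The main obstacle I anticipate is making the level-set comparison fully rigorous: least gradient functions need not have each individual level set minimizing for \emph{every} $t$ (only for a.e.\ $t$, and the chords can degenerate or the boundary trace of $\{v>t\}$ can jump), so the cut-and-paste argument must be phrased at a.e.\ level and then reassembled via the coarea formula $\int_\Omega|Dv_{i,n}|=\int_{\mathbb R}\mathrm{Per}(\{v_{i,n}>t\})\,dt$. An alternative route that sidesteps some of this, and which may in fact be what the authors do, is purely approximative: avoid strictly convex domains entirely and instead note that the comparison is trivial for the approximants because on $\Omega_n$ one can perturb the data to be, say, real-analytic or to have distinct level sets, apply the non-crossing property of minimal chords directly, and then pass to the limit. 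Either way, the only genuinely new input beyond \cite{sternberg} and \cite{RySa} is the monotone dependence of the minimal-chord foliation on the boundary datum, and I would present that as the key lemma before deducing the proposition.
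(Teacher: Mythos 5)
Your proposal follows essentially the same route as the paper: approximate by the strictly convex domains $\Omega_n$, observe $f_1\circ\pi_n\le f_2\circ\pi_n$, obtain $v_1^n\le v_2^n$ on $\Omega_n$, and pass to the pointwise limit. The only difference is that the paper simply cites \cite{sternberg} for the comparison principle on strictly convex domains, whereas you reprove it via the level-set cut-and-paste argument; that reconstruction is correct but not needed here.
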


\begin{proof}
Using the notation in the construction above, we let $\Omega_n$ be the strictly convex sets converging to $\Omega$.
We assume that $v_1^n$ (resp. $v_2^n$) are the unique solution to \eqref{lg} on $\Omega_n$ with the corresponding trace $f_1^n=f\circ \pi_n$ (resp. $f_2^n=f\circ \pi_n$) and $u_1^n$ (resp. $u_2^n$) its restriction to $\Omega$. By definition, we have $f_1^n\leq f_2^n$, then by \cite{sternberg}, we know that $v_1^n\leq v_2^n$.
Then since $u_1^n=v_1^n|_{\Omega}$ and $u_2^n=v_2^n|_{\Omega}$ converge correspondingly to $u_1$ and $u_2$ in $\Omega$, we conclude that
 $u_1\leq u_2$.
 \end{proof}


We close this section with important estimate on the solution, which we will use later in its full power. Its  weaker version appeared in the  course of the proof of  \cite[Theorem 3.10]{RySa}.
\begin{lemma} \label{DuleTV}
 Let us suppose that $f\in C(\d\Omega)\cap BV(\d\Omega)$ and the conditions of the existence Theorem \cite[Theorem 3.8]{RySa} are satisfied, i.e.
 $\Omega$ is an open, bounded and convex set, whose boundary is a polygon and $\{\ell_j\}_{j\in \cI}$ is the finite family of sides of $\d\Omega$. 
In addition we assume that the number of humps is finite, $f$ satisfies the
admissibility conditions C1 or C2 on all sides
of $\d\Omega$, as well as the complementing ordering preservation condition, (\ref{OPC}), and the data consistency condition, (\ref{dcc1}-\ref{dcc2}).
If  $u$ is the solution to the corresponding least gradient problem (\ref{lg}), then
 $$
 |Du|(\Omega) \le \left(\diam \Omega\right) TV(f).
 $$
\end{lemma}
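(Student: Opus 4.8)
The plan is to combine the coarea formula for $u$ with the known structure of the level sets of a least gradient function in a convex planar domain. Recall from \cite{sternberg} (and as used in \cite{RySa}) that if $u$ is least gradient on a convex $\Omega\subset\bR^2$, then for a.e.\ $t\in\bR$ the superlevel set $E_t:=\{u>t\}$ is perimeter minimizing in $\Omega$, so $\partial^*E_t\cap\Omega$ is a disjoint union of open line segments each of which is a chord of $\Omega$ (a minimizing segment cannot terminate inside $\Omega$, and being a length minimizer it is straight and stays in the convex set $\Omega$), and the trace of $\chi_{E_t}$ on $\partial\Omega$ equals $\chi_{\{f>t\}}$. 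Write $m(t)$ for the number of connected components (arcs) of $\{f>t\}\subset\partial\Omega$; applying the coarea formula to $f$ on the one--dimensional manifold $\partial\Omega$ gives $TV(f)=\int_\bR\cH^0(\partial^*\{f>t\})\,dt=\int_\bR 2m(t)\,dt$, since each arc of $\{f>t\}$ contributes exactly its two endpoints; in particular $m(t)<\infty$ for a.e.\ $t$.

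The core step is the inequality $N(t)\le m(t)$ for a.e.\ $t$, where $N(t)$ is the number of chords composing $\partial^*E_t\cap\Omega$. First discard the Lebesgue--null set of levels with $m(t)=\infty$ together with the at most countable set of values that $f$ attains on some nondegenerate subarc of $\partial\Omega$ (this set contains the finitely many hump values $f(I_i)$, which is where the finiteness of the humps enters); for every remaining $t$ the set $\{f=t\}$ is finite and contains no arc, so $\{f>t\}$ consists of $m(t)$ arcs with $2m(t)$ endpoints. Fix such a $t$ and let $p\in\partial\Omega$ be an endpoint of a chord $c\subset\partial^*E_t\cap\Omega$. Since $c$ is a piece of the reduced boundary of $E_t$ abutting $\partial\Omega$ at $p$, the one--sided traces of $\chi_{E_t}$ on the two sides of $p$ along $\partial\Omega$ are $1$ and $0$; hence the one--sided limits of $\chi_{\{f>t\}}$ at $p$ are $1$ and $0$, so $f$ crosses the level $t$ at $p$, i.e.\ $p\in\partial^*\{f>t\}$. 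Distinct chords have distinct endpoints, and no two chords can meet at a point of $\partial\Omega$ (near a boundary point the reduced boundary of a perimeter minimizer is a single arc), so the $2N(t)$ chord endpoints are distinct points of $\partial^*\{f>t\}$, whence $2N(t)\le\cH^0(\partial^*\{f>t\})=2m(t)$. As every chord has length at most $\diam\Omega$, this yields $P(E_t,\Omega)=\cH^1(\partial^*E_t\cap\Omega)\le N(t)\,\diam\Omega\le m(t)\,\diam\Omega$.

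It then remains to integrate: by the coarea formula for $u\in BV(\Omega)$,
$$
|Du|(\Omega)=\int_\bR P(E_t,\Omega)\,dt\ \le\ (\diam\Omega)\int_\bR m(t)\,dt\ =\ \tfrac12(\diam\Omega)\,TV(f)\ \le\ (\diam\Omega)\,TV(f),
$$
which is the claimed estimate (indeed with an extra factor $\tfrac12$ to spare). An alternative that avoids invoking boundary regularity for the possibly non--strictly convex $\Omega$ is to run exactly the same chord count on the approximating strictly convex domains $\Omega_n$ with the continuous data $f_n=f\circ\pi_n$, where \cite{sternberg} applies directly, obtaining $|Dv_n|(\Omega_n)\le\tfrac12(\diam\Omega_n)\,TV(f_n)$, and then to pass to the limit using $|Dv_n|(\Omega)\le|Dv_n|(\Omega_n)$, the lower semicontinuity of the total variation along $v_n\to u$ in $L^1(\Omega)$, $\diam\Omega_n\to\diam\Omega$, and the fact that $\pi_n\colon\partial\Omega_n\to\partial\Omega$ is a monotone surjection, so that $TV(f_n)=TV(f)$. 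I expect the only genuinely delicate point to be the bookkeeping in the middle step — matching each chord endpoint with an actual crossing of $f$ at level $t$ — which is exactly where the hypothesis of finitely many humps is used, to remove the exceptional levels.
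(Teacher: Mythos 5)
Your overall strategy (coarea formula plus a count of the chords of $\partial^*\{u>t\}$ against the crossings of $f$ at level $t$) is genuinely different from the paper's proof, which first reduces to data that are strictly monotone between humps, bounds the number of components of $\d\{u\ge t\}$ directly by counting adjacent pairs of points in $f^{-1}(f(\bX))$ (with $\bX$ the set of vertices and hump endpoints), and then removes the strict monotonicity assumption by an approximation, Arzel\`a--Ascoli and lower semicontinuity argument. Your route is attractive (it would even yield the constant $\tfrac12\diam\Omega$), but its central step is not correctly justified.

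The gap is in the inequality $2N(t)\le 2m(t)$. It rests on the claims that (i) each chord endpoint is a jump point of $T\chi_{E_t}$, (ii) distinct chords have distinct boundary endpoints, and (iii) $N(t)<\infty$ with no accumulation of chord endpoints; you justify (ii) by ``near a boundary point the reduced boundary of a perimeter minimizer is a single arc.'' That statement is false for perimeter minimizers in Miranda's sense, i.e.\ minimality with respect to compactly supported perturbations: the union of two chords emanating from the same point $p\in\d\Omega$ is a perfectly good interior perimeter minimizer, because the corner at $p$ cannot be cut by any competitor whose symmetric difference with $E_t$ is compactly contained in $\Omega$. In that configuration ($E_t$ a wedge with vertex $p$ and far endpoints $q_1,q_2$) the trace of $\chi_{E_t}$ does not jump at $p$ at all, so one would have $4$ chord endpoints but only $2$ trace jumps, and the count fails; interior minimality likewise does not prevent infinitely many short chords from accumulating at a boundary endpoint, which would break (i). What actually excludes these configurations is the stronger fact that $u$ solves the Dirichlet problem (\ref{lg}), so that for a.e.\ $t$ the set $E_t$ minimizes perimeter among sets with the \emph{same boundary trace} --- the wedge then loses to the single cap cut off by $[q_1,q_2]$ --- or, alternatively, the explicit structure of the level sets of the solution constructed in \cite{RySa}. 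Your argument needs this input (plus a separate proof that $N(t)<\infty$) to close; the bookkeeping step you yourself flag as delicate is exactly where it breaks. The same issue persists in your alternative route through the strictly convex approximations $\Omega_n$; the surrounding limit passage there ($TV(f_n)=TV(f)$, $|Dv_n|(\Omega)\le|Dv_n|(\Omega_n)$, lower semicontinuity, $\diam\Omega_n\to\diam\Omega$) is fine.
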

\begin{proof}
Let $\bar I_i=[a_i,b_i], i\in \mathcal I$, be the closure of the humps of $f$, and $\bV =\{v_1,\ldots,v_K\}$ be the vertices of $\d \Omega$. We denote by $\mathbb X=\bV \cup \left(\cup_{i\in I}\{a_i,b_i\}\right)$ the set whose only elements are the hump endpoints and vertices of $\d\Omega$.

We notice that 
$$
\d \Omega \setminus\left(\bigcup_{i\in \mathcal{I}} I_i \cup \bV\right)
= \bigcup_{j=1}^M J_j,
$$
where $J_j$ are line segments contained in the sides of $\d\Omega$. The assumptions we made on data imply that $f$ restricted to each of the intervals $J_j$, $j=1,\ldots, M$ is monotone. 

{\it Step 1.} Now, we will assume that $f$ on each $J_j$ is strictly monotone. We will denote by $\Int I_i$, $i\in \cI$, the relative interior of the humps. Then, due to strict monotonicity of $f$ we have,
$$
\left(\d\Omega \setminus \cup_{i\in\cI} \Int I_i \right)\cap f^{-1}(f(\bX))
$$
consists of a finite set of points, $ \{\xi_1,\ldots, \xi_N\}=:\Xi$. Moreover, we arrange the set 
$f(\bX)= \{y_1,\ldots, y_S\}$, so that
$$
\min f= y_1< y_2< \ldots < y_S = \max f.
$$
The assumption that $f$ is continuous and strictly monotone on each $J_j$ facilitates computation of the total variation of $Du$. Due to the coarea formula, we have
$$
\int_\Omega |D u| = \int_{\min f}^{\max f} P(\{ u \ge t\}, \Omega)\,dt= \sum_{k=2}^S\int_{y_{k-1}}^{y_k}P(\{ u \ge t\}, \Omega)\,dt.
$$

Obviously, the length of each of the components of $\d\{ u \ge t\}$ does not exceed $\diam \Omega$. We have to count the components of $\d\{ u \ge t\}$, $t\in [y_{k-1}, y_k),$ their number will be written as $K(t)$. We denote by $(\xi^k_i, \xi^{k-1}_j)\subset \bR^2$ an open line segment with endpoints $\xi^k_i$ and $\xi^{k-1}_j$. We can find  such intervals satisfying the following conditions,
\begin{equation}\label{rxi}
    \xi^k_i, \xi^{k-1}_j\in \Xi,\quad f( \xi^{k-1}_j)= y_{k-1}, \ f(\xi^k_i) = y_k,
\quad  (\xi^k_i, \xi^{k-1}_j) \subset \d\Omega  \quad (\xi^k_i, \xi^{k-1}_j) \cap \Xi =\emptyset.
\end{equation}
Roughly speaking, the last two conditions mean that  $\xi^k_i$ and $\xi^{k-1}_j$ are the closest neighbors in $\Xi$.

Let us take $J$ a component of $\d\{ u \ge t\}$ such that $J\cap [\xi^k_i, \xi^{k-1}_j] \neq \emptyset.$ Actually, for each $J$ we can find two intervals $[\xi^k_i, \xi^{k-1}_j]$, $[\xi^k_l, \xi^{k-1}_m]$ with the above property. Thus,
$$
K(t) \le \# \cK,
$$
where
$$
\cK =\{ \{\xi_i^{k-1},\ \xi_j^k\}\subset \Xi: (\ref{rxi})\hbox{ holds}\}.
$$
This implies the following estimate
$$
\int_{y_{k-1}}^{y_k} P(\{ u\ge t\}, \Omega) \,dt
\le K(t) \diam\Omega\, (y_k- y_{k-1}) =  \diam\Omega
 \sum_{ \{\xi^{k-1}_i,\ \xi^k_j\}\in \cK} 
f(\xi^k_i) - f(\xi^{k-1}_j).
$$
Since the points of $\Xi$ used in the sum above form a partition of $\d\Omega,$
we deduce that
$$
\int_\Omega |D u| \le \diam \Omega\, TV(f). 
$$
{\it Step 2.} Now, we relax the assumption of strict monotonicity of $f$ on $J_j$. It is easy to construct  $\varphi^j_n\in C^\infty(J_j)$ such that:\\
1) each $\varphi^j_n$ is strictly monotone on $J_j$;\\
2) $(\varphi^j_n - f)|_{\d I_j} =0$;\\
3) sequence $\varphi^j_n$ converges uniformly to $f$ on $J_j$ as $n\to\infty$;\\
4) if $\omega_f$ is a modulus of continuity of $f$, then 
$|\varphi^j_n(x) - \varphi^j_n(y)|\le \omega_f(2|x-y|)$.

In this case we can define $f_n$ by the formula,
$$
f_n(x)= \left\{
\begin{array}{ll}
f(x) & x\in \bigcup_{i\in \cI}I_i,  \\
\varphi^j_n(x)   & x \in J_j,\ j=1,\ldots,S.
\end{array}\right.
$$
We notice that for sufficiently large $n$ functions $f_n$ satisfy the admissibility conditions D1 and D2, i.e. C1 and C2 from \cite{RySa} as well as OPC and DCC. As a result, due to  \cite[Theorem 3.8]{RySa} for sufficiently large $n$ there exists $u_n$, a unique solution to (\ref{lg}) with datum $f_n$.

The Comparison Principle, Proposition \ref{comparison}, implies that $u_n$ are commonly bounded. Moreover, we claim that the sequence $u_n$ is equicontinuous. Indeed, solutions to (\ref{lg}) with continuous data are constructed in  \cite[Theorem 3.8]{RySa} as uniform limits of solutions to auxiliary problems. Those solutions have a common modulus of continuity expressed in terms of the modulus of continuity of boundary data, see \cite[Lemma 3.6]{RySa}. Since the uniform convergence preserves the modulus of continuity, we deduce that
sequence $\{u_n\}_{n=1}^\infty$ is equicontinuous, because of a common modulus of continuity.
Thus, due to Arzela-Ascoli Theorem from any subsequence $\{u_{n_k}\}_{k=1}^\infty$ we can extract another subsequence  $\{u_{n_{k_l}}\}_{l=1}^\infty$ converging uniformly to $u$. Since functions $u_n$ are continuous, so is $u$. In addition, the uniform convergence implies that 
$$
Tu = \lim_{n\to\infty} Tu_n = \lim_{n\to\infty} f_n =f.
$$
Moreover, by Miranda Theorem, see \cite{miranda}, $u$ is a least gradient function. As a result $u$ is a solution to (\ref{lg}). Its uniqueness follows from \cite{gorny2}.  As a result, we conclude that not only a subsequence  $\{u_{n_{k_l}}\}_{l=1}^\infty$ converges uniformly to $u$, but also $\{u_n\}_{n=1}^\infty$ does.

Due to step 1, we have $|Du_n|(\Omega) \le \diam\Omega\, TV(f_n).$
The definition of the total variation of $f_n$ for every $\epsilon>0$ yields existence of the partition of $\Omega$ such that
$$
TV(f_n) \le \sum_{k=1}^m |f_n(x_{k-1}) - f_n(x_k)| + \epsilon.
$$
We notice that we  only increase the sum, if instead of $x_k,$ $x_{k-1}$ we take the closest points $\xi_k$, $\xi_{k-1}$ 
from $\bX$, which are common for all $f_n.$ Thus,
$$
TV(f_n) \le \sum_{k=1}^m |f_n(\xi_{k-1}) - f_n(\xi_k)| + \epsilon.
$$
Since the points from $\bX$ do not depend on $n$, we may pass to the limit on the right-hand-side,
$$
\limsup_{n\to\infty} TV(f_n)-\epsilon \le \lim_{n\to\infty} \sum_{k=1}^m |f_n(\xi_{k-1}) - f_n(\xi_k)|  = \sum_{k=1}^m |f(\xi_{k-1}) - f(\xi_k)|  \le TV(f) .
$$
On the other hand, the lower semicontinuity of the total variation of $|Du|$ implies
$$
|Du|(\Omega)\le \liminf_{n\to\infty}|Du_n|(\Omega) \le \diam\Omega \limsup_{n\to\infty} TV(f_n)\le \diam\Omega \, TV(f).
$$
Our  the claim follows.
\end{proof}

\section{Construction of solutions to (\ref{lg}) in the case of discontinuous data}\label{sDdata}

Now, we are ready to deal with discontinuous data. Namely, we
study \eqref{lg} when $f \in BV$. In this case $f$ might have at most  countably many jump discontinuity points. 
We consider only $f$ satisfying the admissibility conditions presented in Section \ref{subsec:adm condition}.  The idea of the construction of solutions is to approximate first $f$ from above and below by sequences of continuous data satisfying the admissibility conditions, then using a limiting argument we show that we have a sequence of least gradient functions converging to a solution to \eqref{lg} with trace $f$. 


We will deal separately with the cases when $f$ has finitely many humps and $\d\Omega$ has finitely many sides or one of these two quantities is infinite.

\subsection{Approximation of the data}

We want to approximate $f$ by continuous functions from below and above. A prototype of such approximation is given in \cite{NR}, but here we require special properties of the approximating sequences. 
The technique we use requires that $\d\Omega$ has a finite number of sides and $f$ has a finite number of humps. This is our standing assumption in this subsection.

In particular, we want that the approximation from below preserves minima, while the approximation from above preserves maxima. This statement requires an explanation what is a minimum or maximum for a discontinuous $f$.  
\begin{defn}
Let us suppose that $f\in BV(\d\Omega)$ satisfies (\ref{rd0}). 
We shall say that $f$ has at $x_0\in \d\Omega$ a generalized local maximum (resp. generalized local minimum), if there is a neighborhood $U$ of point $x_0$ such that 
$$
 \sup_{x\in U}f(x) =\limsup_{x\to x_0} f(x) \qquad \left(\hbox{resp. } \inf_{x\in U}f(x) =\liminf_{x\to x_0} f(x)
\right).
$$
\end{defn}
We notice that in  case $f$ is continuous the generalized local minimum is a local minimum (resp. the generalized local maximum is a local maximum). 

Here is our first observation. 
\begin{cor} \label{cr1}
If $f\in BV$ and it satisfies the admissibility conditions D1, D2  and D3, then the generalized local maxima/minima are actually attained at the vertices of $\d\Omega$ or on humps. In particular, $f$ is continuous at the local maxima/minima occurring at vertices.
\end{cor}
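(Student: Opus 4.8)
The plan is to argue by contradiction, treating only a generalized local maximum at a point $x_0$; the minimum case follows by replacing $f$ with $-f$, and we may assume $f$ is nonconstant, the claim being trivial otherwise. Put $m:=\limsup_{x\to x_0}f(x)$; by definition there is a neighbourhood $U$ of $x_0$ with $\sup_Uf=m$, and by (\ref{rd0}) we have $f(x_0)\le m$. I would split according to whether $x_0$ is a vertex of $\d\Omega$ or an interior point of some side $\ell$, and in the latter case show $x_0$ lies on a hump.

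Assume $x_0$ is interior to a side $\ell$ but belongs to no closed hump $\bar I_i$. Since the humps are finitely many and closed, there is an open sub-arc $J\subset\ell$ containing $x_0$ with $J\cap\bigcup_i\bar I_i=\emptyset$, and then $f|_J$ is monotone by Remark \ref{mono}. Orienting so that $f|_J$ is nondecreasing, one has $m=\limsup_{x\to x_0}f(x)=f(x_0^+)$, while for every small symmetric neighbourhood $U'\subset J$ of $x_0$ monotonicity gives $\sup_{U'}f=\lim_{x\to(\sup U')^-}f(x)\ge f(x_0^+)=m$; equality in the definition of a generalized maximum then forces $f\equiv m$ on a one-sided neighbourhood of $x_0$. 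Let $A\subset\d\Omega$ be the maximal closed connected arc through $x_0$ on which $f\equiv m$; since $f$ is nonconstant, $A$ is proper, and $f\le m$ near $A$, so $f$ attains a local maximum on $A$. If $A$ stays in the relative interior of one side, then by maximality $A$ is a hump, so $x_0\in\bigcup_i\bar I_i$, a contradiction. Otherwise $A$ meets a vertex, and following $A$ (possible because $A$ is proper and $\d\Omega$ has finitely many sides) one reaches a vertex $v$ at which the one-sided limits of $f$ are $m$ and some value $<m$, so $f$ jumps at $v$. On the side leaving $v$ along which $f$ drops below $m$, the local profile of $f$ across $v$ is ``a one-sided flat maximum together with a jump at $v$'', which is not monotone across $v$; this is incompatible with every alternative (i)--(iii) of D1, and if $f$ carries a hump on that side, the component of its complement abutting $v$ that appears in D2(b) displays the same profile, so D1 fails for it and D2 is violated. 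Either way admissibility breaks down, so $x_0$ must lie on a hump.

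It remains to treat the ``in particular'' claim, so let $x_0$ be a vertex at which $f$ has a generalized local maximum and suppose, for contradiction, that $f$ is discontinuous there; after choosing the orientation, $f(x_0^+)=m>f(x_0^-)$. On the side $\ell^+$ leaving $x_0$, clause D1(i) is impossible since $f$ is discontinuous at $x_0$, and clauses D1(ii), D1(iii) (or D2(b) for the component of $\ell^+\setminus\bigcup_iI_i$ abutting $x_0$, if $\ell^+$ carries a hump) can hold only if $f$ is monotone across $x_0$; as $f$ jumps up at $x_0$, this forces $f$ to be nondecreasing on a one-sided neighbourhood of $x_0$ inside $\ell^+$. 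Combined with $\sup_Uf=m$ this gives $f\equiv m$ on a one-sided neighbourhood of $x_0$, and we are back in the situation of the previous paragraph with $v=x_0$: following the maximal flat arc through $x_0$ and invoking D1/D2 at the vertex where $f$ finally leaves level $m$ contradicts admissibility. Therefore $f$ is continuous at $x_0$, which also completes the proof.

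The step I expect to be the real obstacle is precisely this vertex- and hump-endpoint bookkeeping: one has to verify case by case that each admissible local configuration of $f$ near a putative discontinuous extremum reduces to the pattern ``flat extremum adjacent to a strict change or jump'', and that the monotonicity and continuity demanded by D1 and D2(b) rule this pattern out; the finiteness of the sides and of the humps is what guarantees that the only alternative to landing on a hump is to run, after finitely many steps, into such a vertex.
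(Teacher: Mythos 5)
Your strategy is essentially the paper's: at an interior point of a side you use D3 (through Remark \ref{mono}) to get local monotonicity, deduce from the definition of a generalized local maximum that $f$ must equal $m=\limsup_{x\to x_0}f$ on a one-sided neighbourhood, and try to identify this flat piece with a hump; at a vertex you reduce D2 to D1 via part (b) of Definition \ref{admDC2} and exploit the monotonicity-across-the-vertex clauses of D1. The paper does the same, only far more tersely. The genuine gap is the step on which both of your contradictions pivot: when the maximal flat arc $A$ at level $m$ is not contained in the relative interior of a single side, you assert that at the point where $f$ leaves the level $m$ the one-sided limits are $m$ and some value strictly below $m$, ``so $f$ jumps'' there, and you then use that jump to break D1/D2. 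No such jump need exist: $f$ may leave the level $m$ continuously and monotonically. Concretely, let $f$ increase continuously on $\ell=[p_l,p_r]$ up to an interior point $x_0$, be $\equiv m$ on $[x_0,p_r]$, and decrease continuously on the next side. Every relevant restriction is monotone and continuous, so D1(i) holds on both sides and D2, D3 are vacuous; yet $x_0$ is a generalized local maximum lying on no hump, since the flat interval touches $\d\ell$ and is therefore excluded by Definition \ref{def:Hump}. So the statement you are actually proving in the interior case --- that $x_0$ itself lies on a hump --- is false, and the contradiction you force is not there. What the corollary claims (and what the paper's proof delivers) is only that the extremal value is \emph{attained} at a vertex or on a hump, possibly at a point $x_1\neq x_0$; in the example it is attained at the vertex $p_r\in A$. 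The branch ``$A$ meets a vertex'' should therefore end with ``the maximum is attained at that vertex, done,'' not with a contradiction.

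The same unjustified step underlies your proof of the ``in particular'' clause, since you again defer the contradiction to ``the vertex where $f$ finally leaves level $m$.'' The contradiction has to be extracted at $x_0$ itself: if $f$ jumps up at the vertex, D1(ii)--(iii) for the outgoing side (or for its end component, via D2(b)) demands monotonicity of $f$ on the whole of that side together with a piece of the incoming one, and the profile increase--jump up--flat--eventual decrease below $m$ violates this for every admissible choice of the distinguished endpoint $p_t$, while D1(i) fails outright because of the jump. Finally, you invoke finiteness of the humps and of the sides; neither is a hypothesis of the corollary (which the paper later applies with infinitely many sides and infinitely many humps), and neither is needed, because D2 already prevents humps from accumulating at an interior point of a side, which is all your first step uses.
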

\begin{proof}
Let us suppose $x_0$ is a generalized local maximum and it belongs to the relative interior of a side $\ell$. If $f$ suffers a jump discontinuity at $x_0$, then due to D3 this point must belong to the closure of a hump, $\bar I$ where the local maximum is attained at $x_1$. 

Let us suppose that $x_0$ is a vertex, i.e. $x_0\in \ell_1\cap \ell_2$, where $\ell_1$, $\ell_2$ are two sides of $\d\Omega$. The datum $f$ satisfies either D1 or D2 on these sides. However, part (b) of the definition of D2 means that from the point of view of the present argument, we may assume that $f$ satisfies condition D1 on $\ell_1$ and $\ell_2$. In this case, due to D1 $f$ is continuous at $x_0$, hence $f$ attains its generalized local maximum at $x_0$. The argument for generalized local minima is the same.
\end{proof}

We can parameterize $\d\Omega$ using the arc length parameter, $[0,L)\ni s\mapsto x(s)\in \d\Omega$, $L= \cH^1(\d\Omega)$. We may assume that $f$ attains a global minimum at $x(0)$. We can identify $f$ with a function over $[0,L)$. Moreover, we will extend $f$ to the whole line by setting $\tilde f(x) = f(x)$ if $x\in [0,L)$ and $\tilde f(x) = f(0)$ for $x\not\in [0,L)$. We notice that due to Corollary \ref{cr1} function $\tilde f$ is continuous at $x=0$ and $x=L$. 

Subsequently, we will suppress the tilde and we will identify freely $f$ with its extension to $\bR$.   
Now, we can construct sequences approximating our datum.
\begin{lemma}\label{Aprox1}
Let us suppose that $\Omega$ is convex, and its boundary has a finite number of sides. We assume that $f\in BV(\d\Omega)$ satisfies the admissibility conditions D1, D2 and D3. Moreover, $f$ has a finite number of humps. Then, there exist sequences $\{g_n\}_{n=0}^\infty$, $\{h_n\}_{n=0}^\infty$ in  $C(\d\Omega)$ with the following properties:\\ 
(1) For each $x\in\d\Omega$ sequence $\{g_n(x)\}_{n=0}$ (resp. $\{h_n(x)\}_{n=0}$) is increasing (resp. decreasing)  and $g_n(x)\le f(x) \le h_n(x)$.\\
(2) For all $x\in\d\Omega$, which are  points of continuity of $f$, we have  
$\displaystyle{\lim_{n\to \infty} g_n(x) = f(x) =  \lim_{n\to \infty} h_n(x)}$.\\
(3) If $f$ has a local minimum (resp. maximum) at $x_0\in \d\Omega$, then
$g_n(x_0) = f(x_0)$ (resp. $h_n(x_0)= f(x_0)$.\\
(4) If $\ell$ is a side of $\d\Omega$ and $f|_\ell$ satisfies D1, so do $g_n|_\ell$ and $h_n|_\ell$.
\end{lemma}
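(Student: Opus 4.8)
The plan is to obtain $g_n$ and $h_n$ by an inf- and a sup-convolution of $f$ performed \emph{along} $\d\Omega$, with a slope growing in $n$, corrected near the finitely many ``junction points''. Fix the arc-length parametrisation of $\d\Omega$, let $d$ be the induced geodesic distance (it metrises $\d\Omega$ since there are finitely many sides), and put $\omega:=\max f-\min f<\infty$. For $\lambda>0$ set
\begin{equation*}
g^{\lambda}(x)=\inf_{y\in\d\Omega}\bigl(f(y)+\lambda\, d(x,y)\bigr),\qquad
h^{\lambda}(x)=\sup_{y\in\d\Omega}\bigl(f(y)-\lambda\, d(x,y)\bigr),
\end{equation*}
and, as a first approximation, $g_n:=g^{\,n+\lambda_0}$, $h_n:=h^{\,n+\lambda_0}$, where $\lambda_0$ is a large constant to be fixed below.

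Items (1)--(3) come out of this device almost at once. Testing with $y=x$ gives $g_n\le f\le h_n$ (hence $g_n\le h_n$); raising $\lambda$ raises the penalty, so $g_n$ is nondecreasing and $h_n$ nonincreasing in $n$; and $g^{\lambda},h^{\lambda}$ are $\lambda$-Lipschitz with respect to $d$, hence continuous. For (2) one shows in the standard way that $\lim_n g_n(x)=\liminf_{z\to x}f(z)$ and $\lim_n h_n(x)=\limsup_{z\to x}f(z)$: a competitor with $d(x,y)\ge\delta$ costs at least $\min f+\lambda\delta$, which beats $\inf_{B(x,\delta)\cap\d\Omega}f$ for $\lambda$ large, while the reverse inequality uses test points tending to $x$ faster than $1/\lambda$; by the standing assumption (\ref{rd0}) both one-sided quantities equal $f(x)$ at each continuity point. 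For (3), Corollary \ref{cr1} places every generalized local minimum of $f$ in the interior of a hump $\bar I_i$ (where $f\equiv e_i$) or at a vertex where $f$ is continuous; in either case there is an open $U\ni x_0$ (containing $\bar I_i$ in the hump case) with $\inf_U f=f(x_0)$, so $f(y)+\lambda d(x_0,y)\ge f(x_0)$ for $y\in U$, while for $y\notin U$ the same quantity exceeds $f(x_0)$ whenever $\lambda\ge\omega/\dist(x_0,\d\Omega\setminus U)$. Choosing $\lambda_0$ above the finitely many thresholds obtained this way (and symmetrically for maxima) forces $g_n(x_0)=f(x_0)$ at every local minimum and $h_n(x_0)=f(x_0)$ at every local maximum, for all $n\ge 0$.

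The main obstacle is (4): a plain global convolution need not keep $g_n|_\ell$ monotone when $f$ falls off very steeply just past an endpoint of a side $\ell$ on which $f$ is monotone, so the convolution must be corrected near the junctions. I would decompose $\d\Omega$ into the finitely many humps $\bar I_i$ and the finitely many closures of the components of $\d\Omega\setminus(\bigcup_i\bar I_i\cup\bV)$ — call them monotone arcs, $f$ being monotone on each by Remark \ref{mono} — with junction set $\bV$ together with the hump endpoints. At a junction $p$ put $\gamma(p)=\min\{f(p^-),f(p),f(p^+)\}$ and $\Gamma(p)=\max\{f(p^-),f(p),f(p^+)\}$, and impose $g_n(p)=\gamma(p)$, $h_n(p)=\Gamma(p)$. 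I would then replace $g_n$ on each monotone arc $[\alpha,\beta]$ by a continuous function $\le f$, monotone in the same direction as $f|_{[\alpha,\beta]}$, nondecreasing in $n$, tracking $f$ from below away from the endpoints and taking the values $\gamma(\alpha),\gamma(\beta)$ there; on each hump set $g_n\equiv e_i$, with a shrinking taper near an endpoint only when $\gamma$ there differs from $e_i$ (which occurs only at local-maximum humps); and symmetrically for $h_n$, with $\Gamma$ in place of $\gamma$. These corrections are local with shrinking support, so (1)--(3) survive — at a local-minimum hump $\gamma\equiv e_i$, so $g_n\equiv e_i=f$ on the whole hump, and dually $h_n\equiv e_i=f$ on each local-maximum hump — and if $f|_\ell$ satisfies D1 then $f|_\ell$ is monotone and $\ell$ carries no hump, so $g_n|_\ell$ and $h_n|_\ell$ are monotone by construction and, being globally continuous, continuous at the vertices of $\ell$, hence satisfy D1(i). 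This yields (4).

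What I expect to be genuinely technical is the \emph{compatibility} of the prescribed junction data: on a nondecreasing arc $[\alpha,\beta]$ one needs $\gamma(\alpha)\le\gamma(\beta)$ and $\Gamma(\alpha)\le\Gamma(\beta)$ (otherwise no monotone interpolant squeezed below, resp.\ above, $f$ exists), and one needs the tapers at a junction not to break $g_n\le f\le h_n$. This reduces to a finite case analysis over the type of junction (hump endpoint versus vertex) and the local behaviour of $f$ there — governed respectively by D2, D1 and D3, together with Corollary \ref{cr1} — in which the required inequality between the $\gamma$- and $\Gamma$-values at the two ends of an arc is read off from the pertinent clause of the admissibility conditions; for instance, D3 forbids a monotone arc whose $f$ jumps down onto a strictly lower hump, which is exactly the configuration that would produce $\gamma(\alpha)>\gamma(\beta)$. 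Once this compatibility is established, the interpolation on each arc and hump is routine, and the pieces glue into the required sequences $\{g_n\}$, $\{h_n\}$.
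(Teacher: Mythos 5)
Your argument is essentially correct, but it takes a genuinely different route from the paper's. The paper does not use inf-/sup-convolutions: it writes $f=f^+-f^-$ via the Hahn--Jordan decomposition of the measure $f'$ (the two supports meeting only at the finitely many strict extrema, by Corollary \ref{cr1}) and sets $\bar g_n=f^+\ast\varphi_{1/n}(\cdot-\tfrac1n)-f^-\ast\varphi_{1/n}(\cdot+\tfrac1n)$, with the shifts reversed for $\bar h_n$. The one-sided shifts do three jobs at once: a change of variables gives $\bar g_n\le f\le\bar h_n$ and monotonicity in $n$; the values at local minima (resp.\ maxima) are preserved exactly; and, since only one of $f^{\pm}$ is active on each monotone arc, $\bar g_n$ stays monotone on every arc except within $O(1/n)$ of a strict local maximum, where a single piecewise-linear patch on $[b-\tfrac3n,b+\tfrac3n]$ restores property (4). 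Your sup-/inf-convolution obtains (1)--(3) with less computation — and your verification there, including the use of (\ref{rd0}) and of Corollary \ref{cr1} to get a threshold $\lambda_0$ uniform over the finitely many humps and vertices, is sound — but it pays for this in (4): as you correctly observe, the convolution must be discarded and rebuilt near every junction, and the feasibility of the prescribed endpoint values $\gamma(p),\Gamma(p)$ on each monotone arc is precisely the case analysis you defer. That analysis does go through (for instance, D1(ii)--(iii) forbid a downward jump at the top of an increasing side, and D3 forbids a jump down off a local-maximum hump followed by an increase, which are the configurations that would make $\gamma$ or $\Gamma$ decrease along a nondecreasing arc), but note that the ``routine'' interpolant you still need on each arc — continuous, below $f$, monotone in the right direction, increasing in $n$, converging at continuity points — is in substance the paper's one-sided mollification of a monotone function; the paper's decomposition simply performs that step globally and uniformly, which is why its correction set is smaller and its compatibility check reduces to one elementary inequality at each strict local extremum.
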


\begin{proof}
Since $f\in BV(\bR)$, then  its distributional derivative $f'$ is a measure. Due to the  Hahn decomposition theorem, we have $f' = (f')^+ - (f')^-$, where  $(f')^+$ and $(f')^-$ are mutually singular positive measures. Moreover, $$
\supp (f')^+ \cap \supp (f')^- =: Z
$$
consists of a finite number of points. More precisely, due to Corollary \ref{cr1} elements of $Z$ are vertices of $\d\Omega$. We set,
$$
f^+ (x) = (f')^+([0,x]) + f(0),\qquad
f^-(x) = (f')^- ([0,x]).
$$
Then, we  can write 
$f=f^+ - f^-$, where $f^+, f^-$ are increasing functions. Subsequently, we will regularize $f^+$ and $f^-$.

Let $\varphi_{1/n}$ be the standard approximation to the identity with support in $[-\frac 1n, \frac 1n]$. We consider $n\in \bN$, such that
$$
\min\{\alpha,\beta, \gamma\}  \ge \frac 8 n,
$$
where
\begin{align*}
&\alpha = \min\{|\ell_i|, i=1,\ldots, K\},\\
&\beta  = \min\{|I_j|, j\in \cI\},\\
&\gamma = \min\{|J|:\ J \hbox{ is a connected component of }\ell\setminus \bigcup_{j\in \cI} I_j, \ell\ \hbox{is a side of }\d\Omega\} .
\end{align*}
We consider the mollified sequences
$$
\bar g_n(x)= f^+\ast \varphi_{1/n}\left(x-\frac{1}{n}\right)- 
f^-\ast \varphi_{1/n}\left(x +\frac{1}{n}\right),
$$
$$
\bar h_n(x)= f^+\ast \varphi_{1/n}\left(x+\frac{1}{n}\right)- 
f^-\ast \varphi_{1/n}\left(x -\frac{1}{n}\right).
$$
Functions $\bar g_n$, $\bar h_n$ are continuous. Moreover,  sequences $\bar g_n(x)$ and $\bar h_n(x)$ converge  to $f(x)$ at continuity points, 
because of general properties of the convolution.  

Below, we present an analysis of $\bar g_n$ and a construction of $g_n$. The construction of $h_n$ follows similarly.

We notice that if $[a,b]$ is an interval such that $f|_{[a,b]}$ is monotone increasing, then $\bar g_n'\ge 0$ on $[a, b-\frac 2n]$. Indeed,
for $x\in [a,b-\frac 2n]$, then we have
$$
\bar g_n'(x) = (f')^+ * \varphi_{1/n}\left(x -\frac 1n\right)
- (f')^- * \varphi_{1/n}\left(x +\frac 1n\right).
$$
The first term in non-negative by definition, while the second one vanishes, because  $\supp \varphi_{1/n}(\cdot + x+\frac1n) $ does not intersect the support of $(f')^-$ for $x\in (a,b-\frac 2n]$.

The same argument shows that if $f$ is monotone decreasing on $[b,d]$, then $\bar g_n'\le 0$ on $[b+\frac 2n,d]$.

Let us suppose that $x_0$ is a local minimum of $f$, which does not belong to any hump. Then, due to Corollary \ref{cr1}, point $x_0$ must be an endpoint of intervals $\ell_1$, $\ell_2$, where $f$ is continuous and there are intervals $J_i\subset\ell_i$, $i=1,2,$ such that $f$ satisfies D1 on $J_i$, $i=1,2$. Due to the continuity of $f$ at $x_0$, we notice that $\bar g_n(x_0) = f(x_0).$ Indeed, we notice that $f^+$ restricted to $(x_0-\frac2n, x_0]$ is equal to $f^+(x_0)$, while $f^-$  restricted to $[x_0, x_0+\frac2n)$ is equal to $f^-(x_0)$. Due to the continuity of $f$, the values of $f^\pm(x_0)$ are well-defined. Hence, 
the definition of $\bar g_n$ and the properties of convolution  imply that $$
\bar g_n(x_0)  = f^+(x_0) - f^-(x_0) = f(x_0).
$$

We are going to adjust $\bar g_n$, so that the modification, $g_n$, satisfies claim (4). Let us suppose that $f$ is increasing on $[a,b]$ and decreasing on $[b,d]$, i.e., $f$ has a strict local maximum at $b$. We are going to modify $\bar g_n$ on $[b-\frac3n, b+\frac3n]$, so that the modified function $g_n$ is increasing on $[a,b]$ and decreasing on $[b,d]$. 

Due to the definition of $f^\pm$ we have 
$$
f(x)\ge \min\left\{ f^+\left(b-\frac2n\right), f^-\left(b+\frac2n\right)\right\}\qquad\hbox{for all } x\in \left[b-\frac2n,b+\frac2n\right].
$$
As a result 
$$
\bar g_n(x)\ge \min\left\{ \bar g_n\left(b-\frac2n\right),\bar g_n \left(b+\frac2n\right)\right\}=:\gamma \qquad\hbox{for all } x\in \left[b-\frac2n,b+\frac2n\right].
$$
Keeping this in mind, we set
$$
g_n(x) = \left\{
\begin{array}{ll}
\bar g_n(x)     & x\in [a, b-\frac3n)\cup [b+\frac3n, d], \\
\frac n3 (\gamma - \bar g_n(b-\frac3n))(x-b ) + \gamma     & x\in [b -\frac3n, b],\\
\gamma & x = b,\\
\frac n3 (\bar g_n(b+\frac3n) - \gamma)(x-b) + \gamma&
x\in [b+\frac3n, d].
\end{array}
\right.
$$
Due to the definition of $\gamma$ function $g_n$ is increasing  on $[a,b]$ and decreasing on $[b,d]$, hence condition (4) is satisfied. Let us notice that this argument is correct when $b$ is a vertex  of $\d\Omega$ or a hump endpoint. In the last case $f^-$ is constant in the neighborhood of $b$.

We have to check that properties (1)--(3) hold for $g_n$. Let us fix $n$ and take any point $x\in [a,b]$. We have to show that
$$
g_n(x) \le g_{n+1}(x).
$$
First, let us consider the case $x\in[a, b-\frac3n]$. The definition of $\bar g_n(x)$ yields, $g_n(x) = \bar g_n(x)$, then after  the change of variable $z=n(x - \frac 1n - y)$, we get 
\begin{eqnarray*}
 g_n(x) + f^-(a+\frac1n)&=&\int_{\mathbb R} f^+\left(x-\frac{1}{n}(1+z)\right)\varphi_1(z)\,dz 
 =\int_{-1}^{1}f^+\left(x-\frac{1}{n}(1+z)\right)\varphi_1(z)\,dz \\
 &\leq& \int_{-1}^1 f^+\left(x-\frac{1}{n+1}(1+z)\right)\varphi_1(z)\,dz=g_{n+1}(x)
 + f^-(a+\frac1n).
\end{eqnarray*}
If $x\in [b+\frac 3n, d]$, then we see,
\begin{eqnarray*}
 g_n(x)- f^+(b+\frac2n)&=& -  \int_{-1}^{1}f^-\left(x+\frac{1}{n}(1-z)\right)\varphi_1(z)\,dz \\
 &\leq&-\int_{-1}^1 f^-\left(x+\frac{1}{n+1}(1-z)\right)\varphi_1(z)\,dz=g_{n+1}(x) -
 f^+(b+\frac2n).
\end{eqnarray*}
The case $x\in [b - \frac3n, b+ \frac 3n]$ is obvious due to the definition of $g_n$ and $\gamma$. Thus, (1) holds for sequence $\{g_n\}_{n=1}^\infty$.

We take care of part (2) for $g_n$. Let us take interval $[a,b]$ (resp. $[b,d]$) such that $f$ restricted to it is increasing (resp. decreasing). If $x\in [a,b)$ (resp. $x\in (b,d]$), then for sufficiently large $n$ we see that $x\in [a, b-\frac3n]$ (resp. $x\in [b+\frac 3n, d]$). In this case $g_n(x) = \bar g_n(x)$ and we shall see that $\bar g_n(x) \to f(x)$, as $n\to\infty$ if $x$ is a point of continuity of $f$. Indeed, for a given $\epsilon>0$, we can find $\delta>0$ such that if $|z|<\delta$, then $|f(x+z)-f(x)|<\epsilon$. Moreover, for our choice of $x$, we have $\bar g_n(x) = f^+* \varphi_{1/n}(x-\frac1n)$ or $\bar g_n(x) = - f^-* \varphi_{1/n}(x+\frac1n)$. We will consider only the first possibility, then we have
$$
|\bar g_n(x) - f(x)| \le \int_{-1/n}^{1/n} | f^+(x-\frac1n - y) - f^+(x))\varphi_{1/n}(y)| < \epsilon
$$
for $n>2/\delta$.

Let us suppose that $b$ is a point of continuity of $f$ and $f$ is increasing on $[a,b]$. Checking that $g_n(b)$ converges to $f(b)$ is done as in the previous case, but is based on the fact that $\bar g_n(b \pm \frac 2n)$ goes to $f(b)$, when $n\to \infty.$ The details are omitted.

Property (3) has been already proved for $\bar g_n$, provided that $f$ attains a local minimum at $x_0$, hence $f(x_0) = g_n(x_0)$ and our claim follows.

We conclude that, we have established our claim for sequence $\{g_n\}_{n=1}^\infty$. The obvious modification of this argument is sufficient to show the corresponding results for $\{h_n\}_{n=1}^\infty$.
\end{proof}



\subsection{Data with a finite number of humps and $\d\Omega$ with a finite number of sides} \label{S3.1}

We treat here the case of polygons with a finite number of sides. Moreover, we restrict our attention to $f\in BV(\d\Omega)$ with a finite number of humps. These will be our standing assumption in this subsection. 
In the previous subsection we constructed approximations of $f$ from below and above. We will show here that these sequences  satisfy the assumption of the existence theorem in \cite[Theorem 3.8]{RySa}.


\begin{lemma}\label{gnadm2}
 Let us suppose that $f\in BV(\d\Omega)$ satisfies the admissibility condition D2 and DCC. Then, the approximating sequences of functions, $g_n$ and $h_n$, constructed in Lemma \ref{Aprox1}, satisfy  admissibility condition D2.
\end{lemma}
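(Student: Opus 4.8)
The plan is to prove the statement for $h_n$ in detail; the case of $g_n$ is symmetric, interchanging maxima with minima. Everything will be established for $n$ sufficiently large, which is harmless because $f$ has only finitely many humps. First I would pin down the humps of $h_n$. Reading off the mollified formula for $\bar h_n$ in the proof of Lemma \ref{Aprox1}, and using that convolving a monotone $BV$ function with a mollifier keeps it monotone and creates no new flat local extremum, I would show that the humps of $h_n$ are exactly the intervals $\tilde I^n_i$, $i\in\cI$: $\tilde I^n_i=I_i$ when $I_i=[a_i,b_i]$ is a maximum hump of $f$ (there $h_n\equiv f\equiv e_i$ on $I_i$ by property (3) of Lemma \ref{Aprox1}), and $\tilde I^n_i=[a_i+\tfrac2n,\,b_i-\tfrac2n]$ when $I_i$ is a minimum hump, in which case a direct computation gives $h_n\equiv e_i$ on $\tilde I^n_i$ while $h_n>e_i$ on $I_i\setminus\tilde I^n_i$ and on the one–sided neighbourhoods of $a_i,b_i$ inside $\ell$. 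In every case $\tilde I^n_i\subseteq I_i$, the value $e_i$ is unchanged, and $|I_i|-\tfrac4n\le|\tilde I^n_i|\le|I_i|$.

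For part (a) of Definition \ref{admDC2}, recall that $h_n$ is continuous, so its image set at level $e_i$ is $h_n^{-1}(e_i)$; the crux is to compare $h_n^{-1}(e_i)$ with the set $P(e_i)$ attached to $f$. I would prove two facts. \emph{Localization:} $\sup\{\dist(x,P(e_i)):x\in h_n^{-1}(e_i)\}\to 0$ as $n\to\infty$, because away from the ``tent'' corrections $h_n(x)$ is an average of the primitives $f^{\pm}$ over a window of length $O(1/n)$ about $x$, so $h_n(x)=e_i$ forces $f$ to come near $e_i$ there. \emph{Reaching:} every $y\in P(e_i)$ is within $o(1)$ of $h_n^{-1}(e_i)$ --- immediate when $y$ lies on a hump, a consequence of property (3) of Lemma \ref{Aprox1} (where $h_n=f$) when $y$ is a vertex, and, at a genuine crossing point of $f$, a consequence of $h_n\ge f$, continuity of $h_n$, and $h_n\to f$ at continuity points, which force a sign change of $h_n-e_i$ near $y$. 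Granting these, and fixing a hump $I_i$ of $f$ with realizers $y_i,z_i\notin\ell\supseteq\bar I_i$ as in (\ref{ddfyz}), ``reaching'' applied at $y_i,z_i$ (which are at positive distance from $\bar I_i$) gives
\[
\limsup_{n\to\infty}\Bigl(\dist\bigl(\tilde a_i,\,h_n^{-1}(e_i)\setminus\tilde I^n_i\bigr)+\dist\bigl(\tilde b_i,\,h_n^{-1}(e_i)\setminus\tilde I^n_i\bigr)\Bigr)\le\dist\bigl(a_i,\,P(e_i)\setminus\bar I_i\bigr)+\dist\bigl(b_i,\,P(e_i)\setminus\bar I_i\bigr),
\]
while $|\tilde a_i-\tilde b_i|\to|a_i-b_i|$. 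Since (\ref{DA}) for $f$ is strict and there are finitely many humps, (\ref{DA}) holds for $h_n$ once $n$ is large. That the $h_n$–realizers lie off $\ell$ I would deduce from the sign information above: inside $\ell$, near $\tilde a_i,\tilde b_i$, the function $h_n-e_i$ keeps a constant strict sign up to the previous hump endpoint or vertex, so $h_n^{-1}(e_i)\cap\ell$ stays away from $\tilde a_i,\tilde b_i$; because D2 and DCC for $f$ already keep $P(e_i)\cap\ell$ at distance $\ge\dist(a_i,P(e_i)\setminus\bar I_i)+\eta_i$ from $a_i$ for some $\eta_i>0$ (closedness of $P(e_i)$, Proposition \ref{prop:level set}), localization forces the $h_n$–realizers through $P(e_i)\setminus\ell$ for $n$ large, uniformly over the finitely many humps.

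For part (b), let $J$ be a component of $\ell\setminus\bigcup_i\tilde I^n_i$ with an endpoint $p$ in $\partial\ell$; if $\ell$ carries no hump, D2 on $\ell$ is just D1 on $\ell$, which holds for $h_n$ by property (4) of Lemma \ref{Aprox1}, so assume $I_{i_0}$ is the first hump of $f$ met from $p$, with matching $f$–component $J^f$, so that $J=J^f$ or $J=J^f\cup[a_{i_0},a_{i_0}+\tfrac2n]$. By D2(b) for $f$, $f|_{J^f}$ satisfies D1 (Definition \ref{admDC1}); inspecting clauses (i)--(iii) at the hump endpoint of $J^f$, where $f$ may jump, shows the monotonicity direction of $f$ on $J^f$ must match the type of $I_{i_0}$ --- increasing into a maximum hump, decreasing into a minimum hump --- since the opposite alternative either forces $f$ to equal the hump value on an interval adjacent to $I_{i_0}$, contradicting its maximality, or produces a jump at $a_{i_0}$ after which $f$ is non-monotone on every neighbourhood, contradicting every clause of D1. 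With this, the mollified $h_n$, which near $a_{i_0}$ is pulled monotonically towards $e_{i_0}$ (and is decreasing on the collar $[a_{i_0},a_{i_0}+\tfrac2n]$ in the minimum case), is monotone on all of $J$; being continuous, $h_n|_J$ then satisfies D1 through clause (i), which is exactly D2(b) for $h_n$.

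The main obstacle is the pair of comparison facts in part (a) relating $h_n^{-1}(e_i)$ to $P(e_i)$. Making them rigorous requires unwinding the mollification–and–tent construction of Lemma \ref{Aprox1} together with the admissibility of $f$: notably that D3 (via Corollary \ref{cr1} and the normalization (\ref{rd0})) forbids strict local extrema of $f$ in the relative interiors of sides, so that the only features of $f$ inside $\ell$ near a hump endpoint are vertices and other hump endpoints, and that D2(a) together with DCC keep $P(e_i)$ off $\ell$ near $a_i,b_i$ and control $f$ on the complementary arc $\overline{y_iz_i}_{a_ib_i}$. The remaining work is the bookkeeping of the $O(1/n)$ shifts $\tilde a_i-a_i$, $\tilde b_i-b_i$, the behaviour near vertices, and the uniform choice of $n$ over the finitely many humps.
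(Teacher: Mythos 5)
Your proposal is correct and follows essentially the same route as the paper: identify the humps of $g_n$, $h_n$ as $O(1/n)$ perturbations $[a_i+\tfrac2n,b_i-\tfrac2n]$ of the humps of $f$, show that the realizing points $y_i^n,z_i^n$ converge to $y_i,z_i$ (using DCC to exclude a strict extremum of $f$ at $y_i$ and D3 to get local monotonicity there), and conclude via the strictness of (\ref{DA}) and the finiteness of the number of humps. The paper packages the convergence of $y_i^n$ as a case analysis on the behaviour of $f$ at $y_i$ (flat at level $e_i$, jump across $e_i$, or continuous) rather than your localization/reaching dichotomy, and it does not explicitly verify D2(b) or the requirement that $y_i^n,z_i^n$ lie off $\ell$, so your handling of those two points goes beyond what the paper records.
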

\begin{proof}
We take a 
hump of $f$, i.e. $\bar I_i =[a_i, b_i]$ and $f(a_i, b_i) = e_i$. 
We have to investigate the behavior of the humps of $g_n$ and $h_n$ for large $n$. Since the argument for both type of functions is the same, we will present the argument for $g_n$ only. For the sake of definiteness, we may assume that $f$ attains a maximum on $I_i$, the case of $f$ having a  minimum there is similar.
By Lemma \ref{Aprox1} $g_n(a_i)\le f(a_i^-)$, (recall that vectors $b_i -a_i $ agree with the positive orientation of $\d\Omega$).
Hence,
$[\alpha^n_i, \beta^n_i] = g_n^{-1}(e_i) \cap I_i \subsetneq P(e_i) \cap I_i$.
Actually, by the construction of $\bar g_n$ and $g_n$ we deduce that $\alpha_i^n= a_i + \frac 2n$ and $\beta_i^n= b_i - \frac 2n$, for sufficiently large $n$. The arithmetic operations are performed  on the arclength parameter, so that they are well-defined.

Hence,
$
\alpha^n_i \to a_i$ and 
$\beta^n_i \to b_i.
$
We have to investigate the behavior of $y^n_i$ corresponding to $\alpha^n_i$ (resp. $z^n_i$ corresponding to $\beta^n_i)$. Their behavior is similar, that is why we can look only at $y^n_i$. 

Our first observation in this direction is that $y_i$ cannot be a strict local maximum  of $f$, 
because this contradicts DCC. 
Thus, there exists $U$, a neighborhood of $y_i$, such that $f|_U$ is monotone. We have the following possibilities: \\
(i) there exists an interval $J=(p_i,q_i)\subset \ell$ containing $y_i$, where $\ell$ is a side of $\Omega$ and such that $f$ restricted to $(p_i,q_i)$ is monotone, moreover, the tangent vector $p_i-q_i$ agrees with the positive orientation of $\d\Omega$; \\ 
(ii) $y_i$ is a vertex of $\d\Omega$.


We shall discuss the positively oriented arc contained in $\d\Omega$ with endpoints $q_i, a_i$ and containing $p_i, y_i$.
We face the following possibilities of the behavior of $f$ at $y_i$:\\
(a) $f$ is  discontinuous at $y_i$ and $f|_{{[q_i,y_i)}} =  e_i$ and $f(p_i)<  e_i$;\\
(b) 
$f$ is  discontinuous at $y_i$ and 
for all $x\in{(q_i,y_i)}$, we have $f(x) > f(y^-_i)$ ;\\
(c) $f$ is continuous at $y_i$.

We remark that (a)  corresponds to the case when $y_i$ may belong to a closure of a hump $I_j$. 

First, we consider  (a).  Since $f(y^+)<e_i$, then due to the properties of the definition of $g_n$, we conclude that $g_n(x)<e_i$ for $x\in (y_i-\frac 2n, y_i]$ and $g_n(y_i- \frac 2n) = e_i$. Hence, $y^n_i = y_i- \frac 2n$.

If (b) occurs, then we proceed as follows. We notice that by  definitions of $P(e_i)$, see Definition \ref{image set} and that of $g_n$ we have,
\begin{equation}\label{r3.4}
g_n(y_i) \le f(y_i^+) \le e_i {\le f(y^-_i)},\qquad \hbox{{and}}\qquad e_i = g_n(y_i^n).    
\end{equation}
Due to assumption (c) and the convolution properties we deduces that 
$g_n(y_i) <g_n(y^n_i)$ implying that $y_i > y^n_i$. We claim that $y^n_i$ converges to $y_i$, when $n\to\infty$. Indeed, if there were a subsequence  $y^{n_l}_i {\le y_i -\delta}$, 
then, after assuming that $y-\delta$ is a continuity point of $f$, we see that
$$
g_{n_l}(y_i-\delta) \le e <f(y_i-\delta)\le f (y^{n_l}_i) .
$$
The LHS goes to $f(y_i -\delta)$. Thus, we reached a contradiction. In other words, $y^n_i \to y_i$.

We  claim that (c) follows from (a) and (b), where we described possible types of behavior of $f$ in a neighborhood of $y$. Now, we consider $f$ continuous at $y_i$. If $y_i$ is in a relative interior of an interval, where $f$ is constant, e.g. on a hump, then there is nothing to do. Otherwise we can reuse (\ref{r3.4}). If we realise we have not based our argument on $f(y^+_i) \neq f(y^-_i)$, i.e., $f(\zeta)< f(y_i)$ for $\zeta<y_i$ close to $y_i$, but rather on non-constancy of $f$, then we see that our claim follows. 



Thus, we showed that 
$$
\alpha^n_i \to a_i, \quad \beta^n_i \to b_i, \quad y^n_i \to y_i, \quad z^n_i \to z_i \qquad\hbox{when }n\to \infty.
$$
In this way, we deduce that
(\ref{DA}) holds for sufficiently large $n$.
%
\end{proof}

%

\begin{lemma}\label{moDCC}
Let us suppose that $f\in BV(\Omega)$ satisfies D1, D2 and D3, see Definitions \ref{admDC1}--\ref{admDC3},  as well as  DCC (\ref{dcc1}-\ref{dcc2}). We also assume that $\Omega$ has a finite number of sides and $f$ has a finite number of humps. Then, the approximating sequences $\{g_n\}_{n=1}^\infty$,  $\{h_n\}_{n=1}^\infty$ satisfy  DCC too.
\end{lemma}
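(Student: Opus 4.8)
The statement to prove is Lemma~\ref{moDCC}: the approximating sequences $g_n$, $h_n$ from Lemma~\ref{Aprox1} inherit the data consistency condition DCC from $f$. The plan is to verify both parts of Definition~\ref{dfdcc}, namely the lower bound (\ref{dcc1}) on the arc $\overline{yz}_{ab}$ and the monotonicity-near-the-endpoints condition (\ref{dcc2}), for $g_n$ with $n$ large. Since the situation for $h_n$ is symmetric (replace maxima by minima and reverse inequalities), I would present the argument for $g_n$ only. Throughout I would fix a hump $\bar I_i = [a_i,b_i]$ of $f$ with $f(I_i)=e_i$, assume (for definiteness) that $f$ attains a local maximum there, and recall from the proof of Lemma~\ref{gnadm2} that for large $n$ the hump of $g_n$ lying inside $I_i$ is $[\alpha_i^n,\beta_i^n] = [a_i+\tfrac2n, b_i - \tfrac2n]$, and that the associated points $y_i^n, z_i^n$ converge to $y_i, z_i$ respectively, with $y_i^n \le y_i$, $z_i^n \le z_i$ in the positive orientation.

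\textbf{Step 1: the arc inclusion and (\ref{dcc1}).} First I would compare the arc $\overline{y_i^n z_i^n}_{\alpha_i^n\beta_i^n}$ associated to the $g_n$-hump with the arc $\overline{y_i z_i}_{a_ib_i}$ associated to the $f$-hump. Because $\alpha_i^n \to a_i$, $\beta_i^n\to b_i$, $y_i^n\to y_i$, $z_i^n\to z_i$ and all these points eventually lie in controlled neighborhoods, for $n$ large the $g_n$-arc is contained in (a small neighborhood of) the $f$-arc; more precisely $\overline{y_i^n z_i^n}_{\alpha_i^n\beta_i^n} \subset \overline{y_i z_i}_{a_ib_i} \cup U$ where $U$ is an arbitrarily small neighborhood of $\{y_i,z_i\}$. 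On the bulk $\overline{y_i z_i}_{a_ib_i}$ we have $f \ge e_i$ by (\ref{dcc1}) for $f$; since $g_n \le f$ this does \emph{not} immediately give $g_n \ge e_i$, so the key point is that on $\overline{y_i z_i}_{a_ib_i}$ the value $e_i$ is not a maximum of $f$ — by DCC for $f$ combined with the $BV$/hump structure, $f$ either equals $e_i$ only on companion humps or strictly exceeds it, and in either case the construction of $g_n$ (which on monotone increasing pieces satisfies $g_n' \ge 0$ off a $\tfrac2n$-collar, and which preserves the companion humps by property (3) of Lemma~\ref{Aprox1}) forces $g_n \ge e_i$ there for large $n$. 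On the small neighborhood $U$ of $y_i$ (resp.\ $z_i$), I would use that near $y_i$ the function $f$ is monotone (cases (a),(b),(c) of the proof of Lemma~\ref{gnadm2}) with $f \ge e_i$ on the side toward the arc, and that $g_n$ is built to be monotone on those pieces and to agree with $f$ at the relevant endpoints, so $g_n \ge g_n(y_i^n) = e_i$ on the portion of $U$ inside the $g_n$-arc. This yields $\inf_{x\in \overline{y_i^n z_i^n}} g_n(x) \ge e_i = g_n(I_i \text{ part})$, i.e.\ (\ref{dcc1}) for $g_n$.

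\textbf{Step 2: the monotonicity condition (\ref{dcc2}).} Near $z_i$ (and symmetrically near $y_i$), condition (\ref{dcc2}) for $f$ says $f$ is strictly monotone — decreasing as one moves away from $z_i$ along $\cN(z_i,\epsilon)$ (for a maximum hump). Because $y_i^n \to y_i$, $z_i^n\to z_i$, for $n$ large the points $y_i^n, z_i^n$ lie in $B(y_i,\epsilon/2)$, $B(z_i,\epsilon/2)$, and on such a neighborhood $f$ is monotone, so one is in case (i) or (ii) of the proof of Lemma~\ref{gnadm2}; in case (i) there is an interval $J\subset\ell$ on which $f$ is monotone, and the construction of $g_n$ preserves strict monotonicity away from a $\tfrac3n$-collar of the endpoints of $J$ — and by property (4) of Lemma~\ref{Aprox1} plus the explicit modification in the proof of Lemma~\ref{Aprox1}, $g_n$ is in fact monotone on all of $J$. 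Hence on $\cN(z_i^n,\epsilon')$ for suitable $\epsilon' < \epsilon$ independent of $n$ (shrunk once to absorb the collar and the possible crossing of a hump endpoint $b_j$ of a companion hump) $g_n$ is strictly monotone in the required direction. The only subtlety is that $g_n$ is merely monotone, not strictly monotone, on the modified collars and on flat companion humps; but (\ref{dcc2}) concerns $\cN(z,\epsilon)$ which by Definition~\ref{darc} excludes the arc $\overline{yz}_{ab}$ and, after choosing $\epsilon'$ small, avoids those flat pieces, so strictness survives. Assembling Steps~1 and~2 gives DCC for $g_n$, and the identical argument with reversed inequalities gives it for $h_n$.

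\textbf{Main obstacle.} The delicate point is Step~1, the lower bound (\ref{dcc1}): passing from $f\ge e_i$ to $g_n \ge e_i$ is not automatic since $g_n \le f$. One must exploit that, by DCC and the finiteness of humps, $f$ attains the value $e_i$ on $\overline{y_i z_i}_{a_ib_i}$ only on companion humps (which $g_n$ reproduces exactly by Lemma~\ref{Aprox1}(3)) and strictly exceeds $e_i$ elsewhere on a set bounded away from $\{y_i,z_i\}$, so that the uniform smallness of $f - g_n$ (controlled by $\tfrac1n$ and the modulus of continuity, away from jumps) closes the gap. Handling the interaction of $g_n$'s $\tfrac2n$- and $\tfrac3n$-collars with neighboring humps and vertices — and checking that the $\epsilon$ in (\ref{dcc2}) can be chosen uniformly in $n$ — is the technical heart of the argument.
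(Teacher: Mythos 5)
Your proposal is correct in outline and follows essentially the same route as the paper's proof: establish the arc inclusion $\gamma^n_g \subset \overline{yz}_{ab} \subset \gamma^n_h$ (where $\gamma^n_g$, $\gamma^n_h$ denote the arcs attached to the humps of $g_n$, $h_n$) from the monotonicity of $f$ near $y$ and $z$; dispose of $h_n$ trivially via $h_n \ge f$ and (\ref{dcc1}) for $f$; and handle $g_n$ via the preservation of local minima, i.e.\ Lemma \ref{Aprox1}(3). One remark on your ``main obstacle'' paragraph: the appeal to ``uniform smallness of $f-g_n$'' is not the right tool and, taken literally, does not close the gap on its own --- in the transition zone where $f>e_i$ but $f-e_i$ is not bounded away from zero (e.g.\ as one approaches a companion hump, a local minimum of $f$ on the arc, or the endpoints $y_i$, $z_i$), closeness of $g_n$ to $f$ gives no lower bound by $e_i$; moreover $f-g_n$ is not uniformly small near jump points of $f$. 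The paper's mechanism is purely qualitative and avoids this: since $g_n$ is monotone on the same pieces as $f$, its infimum over $\gamma^n_g$ is controlled by its values at the endpoints $y^n_g$, $z^n_g$ (where $g_n=e$) and at the local minima of $f$ on the arc, where $g_n(x_0)=f(x_0)\ge e$ by (\ref{dcc1}) for $f$; no quantitative estimate is needed. Since you do also invoke the monotone-piece structure and property (3), the correct argument is contained in your sketch; you are, in addition, more explicit than the paper about verifying (\ref{dcc2}) for the approximants, which the paper leaves implicit in its Step 2.
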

\begin{proof}
It is sufficient to consider the case of $f$ attaining a local maximum on a hump $I_i$.
The other case is handled in a similar manner.

{\it Step 1.} We will first state  (\ref{dcc1}) for the approximating functions $g_n$ and $h_n$. For this purpose we introduce more notation. If $\bar I = [a,b]$ is the closure of a hump of $f$, then $[a^g_n, b^g_n]$ (resp. $[a^h_n, b^h_n]$) is the closure of a corresponding hump of $g_n$ (resp. $h_n)$. We shall write
$$
\gamma^n_g = \overline{y^n_gz^n_g}_{a^n_g b^n_g}, \qquad
\gamma^n_h = \overline{y^n_hz^n_h}_{a^n_h b^n_h}.
$$
Keeping this in mind, we have to show that
\begin{equation}\label{nier}
 \inf_{x\in \gamma^n_g} g_n(x)\ge g_n((a^n_g, b^n_g)),\qquad
\inf_{x\in \gamma^n_h} h_n(x)\ge h_n((a^n_h, b^n_h)).
\end{equation}
where $(a^n_g, b^n_g)$ (resp. $(a^n_h, b^n_h)$) is the interior of a hump of $g_n$ (resp. $h_n$) approximating $(a,b)$. 
We recall that
$$
g_n((a^n_g, b^n_g)) = h_n((a^n_h, b^n_h)) = f((a,b)) = e.
$$

{\it Step 2.}  
We notice that there exists $\delta>0$ such that $f$ restricted to $B(y,\delta)\cap\d\Omega$ (resp. $B(z,\delta)\cap\d\Omega$) is monotone. Indeed, this follows from (\ref{dcc1}) implying that $f$ cannot have an extremum at $y$ (resp. $z$) and the fact that $f$, satisfying the assumptions of the Lemma, may change its type of monotonicity only a finite number of times. 

In particular, this observation implies that 
$$
\gamma^n_g \subset \overline{yz}_{ab}  \subset \gamma^n_h.
$$

{\it Step 3.}  We will show the second of the inequalities in (\ref{nier}). 
Since $h_n\ge f$, we notice that due to (\ref{dcc1}) we have
$$
\inf_{\overline{yz}_{ab}} h_n\ge \inf_{\overline{yz}_{ab}} f\ge e. $$
Moreover, due to Step 2, we see
$$
\inf_{x\in \gamma^n_h\setminus \overline{yz}_{ab}} h_n(x)\ge h_n((a^n_h, b^n_h)) =e.
$$
As a result,  our claim holds.

{\it Step 4.}  Now, we will show the first  inequality in (\ref{nier}). By definition $f\ge g_n$, in particular for $x\in \gamma^n_g{\subset \overline{yz}_{ab}} $, we have
$$
f(x) \ge e = 
g_n(y^n_g)= g_n(z^n_g).
$$
Let us consider $x_0\in \gamma^n_g$, any local 
minimum of $f$. 
Due to Lemma \ref{Aprox1} (3) we have that $g_n(x_0) = f(x_0)$.
Hence, our claim follows.
\end{proof}
\begin{lemma}\label{moOPC}
Let us suppose that $f\in BV(\Omega)$ satisfies D1, D2 and D3, see Definitions \ref{admDC1} and \ref{admDC2}, as well as the OPC, (\ref{OPC}). We also assume that $\Omega$ has a finite number of sides and $f$ has a finite number of humps. Then, the approximating sequences $\{g_n\}_{n=1}^\infty$,  $\{h_n\}_{n=1}^\infty$ satisfy OPC too.
\end{lemma}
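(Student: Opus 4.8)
The plan is to deduce the order-preserving condition for the approximants from the convergence of the relevant geometric data, established for $g_n$ and $h_n$ in the proof of Lemma \ref{gnadm2}, together with the observation that (\ref{OPC}) is a purely topological, quantitative-free disjointness statement that is stable under Hausdorff convergence of the defining segments. Since $f$ has only finitely many humps and $\d\Omega$ only finitely many sides, the humps of $g_n$ are, for $n$ large, exactly the intervals $[a_i+\frac2n,\,b_i-\frac2n]$ — one for each hump $\bar I_i=[a_i,b_i]$ of $f$ — and likewise for $h_n$; hence it is enough to verify (\ref{OPC}) for an arbitrary ordered pair of humps of $f$ and their $g_n$-counterparts, for all sufficiently large $n$, the argument for $h_n$ being identical. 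So I would fix humps $\bar I_1,\bar I_2$ of $f$, write $[\alpha_i^n,\beta_i^n]$ for the approximating hump of $g_n$ and $y_i^n,z_i^n$ for an arbitrary admissible choice of corresponding points of $g_n$, i.e. satisfying (\ref{ddfyz}) with $g_n$ in place of $f$.

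Then I would argue by contradiction. Suppose (\ref{OPC}) failed for $g_n$ along a subsequence $n\to\infty$, so that there are points $p_n$ lying in $([\alpha_1^n,y_1^n]\cup[\beta_1^n,z_1^n])\cap([\alpha_2^n,y_2^n]\cup[\beta_2^n,z_2^n])$. By the proof of Lemma \ref{gnadm2} one has $\alpha_i^n\to a_i$, $\beta_i^n\to b_i$ and, after passing to a further subsequence, $y_i^n\to y_i$, $z_i^n\to z_i$, where $y_i,z_i$ is an admissible choice of corresponding points for $f$ at $\bar I_i$ (that is, the limiting points lie in $P(e_i)\setminus\bar I_i$ and realize the minimal distances in (\ref{ddfyz})). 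Extracting once more so that $p_n\to p$, and using that $[\alpha_i^n,y_i^n]\to[a_i,y_i]$ and $[\beta_i^n,z_i^n]\to[b_i,z_i]$ in the Hausdorff distance on compact subsets of $\bar\Omega$, I get $p\in([a_1,y_1]\cup[b_1,z_1])\cap([a_2,y_2]\cup[b_2,z_2])$, which contradicts (\ref{OPC}) for $f$ applied to the admissible choice $(y_1,z_1,y_2,z_2)$. Since $f$ has only finitely many pairs of humps, this proves (\ref{OPC}) for $g_n$, and the same reasoning gives it for $h_n$, once $n$ is large enough.

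The main obstacle is the step that is \emph{not} new here but must be invoked carefully: that an arbitrary admissible choice of corresponding points $y_i^n,z_i^n$ for $g_n$ subsequentially converges to an admissible choice for $f$. Concretely, one needs every accumulation point $\tilde y$ of $\{y_i^n\}$ to belong to $P(e_i)$ — which follows from $g_n(y_i^n)=e_i$, the monotone convergence $g_n\uparrow$ with $g_n\le f$ and $g_n\to f$ at continuity points of $f$, the constraint (\ref{rd0}), and density of continuity points — and to realize $\dist(a_i,P(e_i)\cap(\d\Omega\setminus\bar I_i))$, which comes from passing to the limit in $\dist(\alpha_i^n,y_i^n)=\dist(\alpha_i^n,g_n^{-1}(e_i)\cap(\d\Omega\setminus\bar I_i^n))$ together with $\alpha_i^n\to a_i$; this is exactly what Lemma \ref{gnadm2} provides (its use of DCC enters precisely to exclude $\tilde y$ being a strict local extremum), so in the write-up I would simply cite it. Beyond that, the argument is soft: compactness of $\d\Omega$ to extract $p_n\to p$, and the elementary continuity of $(p,q)\mapsto[p,q]$ in the Hausdorff metric to propagate disjointness from $f$ to $g_n$.
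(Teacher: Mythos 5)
Your proposal is correct and follows essentially the same route as the paper: argue by contradiction, use the convergence $\alpha_i^n\to a_i$, $\beta_i^n\to b_i$, $y_i^n\to y_i$, $z_i^n\to z_i$ established in the proof of Lemma \ref{gnadm2}, extract a convergent subsequence of intersection points, and pass to the limit to contradict (\ref{OPC}) for $f$. The only cosmetic difference is that the paper encodes ``the limit point lies on the limit segment'' via continuity of a polynomial collinearity relation, whereas you use Hausdorff convergence of the segments directly; your explicit remark that an arbitrary admissible choice of $y_i^n, z_i^n$ subsequentially converges to an admissible choice for $f$ is a welcome clarification of a point the paper leaves implicit.
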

\begin{proof}
We will discuss the case of the sequence $\{g_n\}_{n=1}^\infty$ only,  because the argument for  $\{h_n\}_{n=1}^\infty$ is the same. We  consider humps $[a_1,b_1]$, $[a_2,b_2]$ and the corresponding points $y_i$, $z_i$, $i=1,2$. For any $g_n$ we find humps $[a^{g_n}_i, b^{g_n}_i]\subset [a_i,b_i]$ and  the corresponding points $y^{g_n}_i$, $z^{g_n}_i$, $i=1,2$. 
We have seen in the course of proof bf Lemma \ref{Aprox1} that 
$$
\lim_{n\to\infty} a^{g_n}_i = \lim_{n\to\infty} a^{h_n}_i =a,\qquad
\lim_{n\to\infty} b^{g_n}_i = \lim_{n\to\infty} b^{h_n}_i =b,\quad i=1,2,
$$
$$
\lim_{n\to\infty} y^{g_n}_i = y_i,\qquad \lim_{n\to\infty} z^{g_n}_i,\quad i=1,2.
$$
Let us now suppose that our claim is not true. i.e. there are infinitely many intervals $J_1^n$, $J_2^n$ of the form,
$$
J_1^n = [a^{g_n}_1, y^{g_n}_1],\quad \hbox{or}
\quad J_1^n = [b^{g_n}_1, z^{g_n}_1]
$$
$$
J_2^n = [a^{g_n}_2, y^{g_n}_2],\quad \hbox{or}
\quad J_2^n = [b^{g_n}_2, z^{g_n}_2],
$$
with not empty intersection, $\xi^n \in J_1^n\cap J_2^n$. The exact form of $J^n_i$, $i=1,2$ does not matter. We may write $J_1^n=[\alpha^n,\beta^n]$, $J_2^n = [\gamma^n,\delta^n]$, hence points $\alpha^n,\beta^n, \xi^n$ are co-linear so are the points $\gamma^n,\delta^n, \xi^n$. 
We may write this as
\begin{equation}\label{req}
Q(\alpha^n,\beta^n, \xi^n) = 0,\qquad Q(\gamma^n,\delta^n, \xi^n) =0,
\end{equation}
where $Q$ is a polynomial. Since sequence $\xi_n$ is bounded we may extract a convergent subsequence (without  relabeling), $\xi^n \to \xi$. Hence, the continuity of $Q$ implies that,

$$
Q(\alpha,\beta, \xi) = 0,\qquad Q(\gamma,\delta, \xi) =0.
$$
But this means that
$$
([a_1,y_1] \cup [b_1,z_1]) \cap ([a_2,y_2] \cup [b_2,z_2])\neq \emptyset.
$$
In other words, we reached a contradiction with (\ref{dcc1}). Our claim follows.
\end{proof}



We are ready to prove the main result of this section.
 \begin{theorem}\label{Main2}
Let us suppose that 
$\Omega$ is convex and $\d\Omega$ is a polygon. Moreover,
$f\in BV(\d\Omega)$
$f$ satisfies the admissibility conditions D1, D2,  D3,  as well as the complementary conditions OPC, (\ref{OPC}), and DCC, (\ref{dcc1}--\ref{dcc2}).
If $f$ has finitely many humps, then there exists a solution $u$ to the least gradient problem (\ref{lg}). In addition, 
\begin{equation}\label{rtw37}
\| u \|_{L^\infty}\le \| f\|_{L^\infty}\qquad\hbox{and}\qquad\int_\Omega |Du| \le \diam \Omega \, TV(f).
\end{equation}

\end{theorem}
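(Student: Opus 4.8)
The plan is to obtain $u$ as a monotone limit of solutions of \eqref{lg} with continuous data, combining the two–sided approximation of Lemma~\ref{Aprox1} with the comparison principle of Proposition~\ref{comparison}, the variation bound of Lemma~\ref{DuleTV}, and Miranda's stability theorem for functions of least gradient. First I would invoke Lemma~\ref{Aprox1} to produce sequences $\{g_n\},\{h_n\}\subset C(\d\Omega)$ with $g_n\uparrow$, $h_n\downarrow$ pointwise, $g_n\le f\le h_n$ on $\d\Omega$, with $g_n,h_n\to f$ at every continuity point of $f$, each $g_n,h_n$ piecewise monotone with finitely many humps and satisfying D1 on every side on which $f$ does. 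Since for a continuous function the conditions D1, D2 reduce to C1, C2 of \cite{RySa} and D3 holds trivially, Lemmas~\ref{gnadm2}, \ref{moDCC} and \ref{moOPC} guarantee that $g_n$ and $h_n$ satisfy C1 or C2 on all sides together with OPC and DCC; as $\d\Omega$ has finitely many sides, \cite[Theorem~3.8]{RySa} provides unique solutions $v_n,w_n$ of \eqref{lg} on $\Omega$ with traces $g_n$ and $h_n$ respectively.

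Next I would pass to the limit. From $g_n\le f\le h_m$ for all $n,m$ and Proposition~\ref{comparison} one gets $v_n\le w_m$ for all $n,m$, together with $v_n\le v_{n+1}$ and $w_{n+1}\le w_n$; hence $v_n\uparrow v$ and $w_n\downarrow w$ pointwise a.e.\ in $\Omega$, with $v_1\le v_n,w_n\le w_1$ and $v\le w$, and dominated convergence promotes this to convergence in $L^1(\Omega)$. Lemma~\ref{DuleTV} gives $|Dv_n|(\Omega)\le(\diam\Omega)\,TV(g_n)$ and $|Dw_n|(\Omega)\le(\diam\Omega)\,TV(h_n)$, and, because $g_n$ and $h_n$ are piecewise monotone with a uniformly bounded number of monotonicity intervals whose endpoints converge to the fixed finite set of vertices and hump endpoints, a partition argument like the one closing the proof of Lemma~\ref{DuleTV} yields $\limsup_n TV(g_n)\le TV(f)$ and $\limsup_n TV(h_n)\le TV(f)$. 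Thus $\{|Dv_n|(\Omega)\}$ and $\{|Dw_n|(\Omega)\}$ are bounded, so $v,w\in BV(\Omega)$; by Miranda's theorem \cite{miranda} both are functions of least gradient, and lower semicontinuity of the total variation gives $|Dv|(\Omega),|Dw|(\Omega)\le(\diam\Omega)\,TV(f)$.

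The heart of the proof is to check that $f$ is attained in the trace sense. Here I would use that the trace operator $T$ is order preserving, the trace of a nonnegative $BV$ function being nonnegative a.e.\ on $\d\Omega$: from $v-v_n\ge 0$ we get $Tv\ge Tv_n=g_n$, and letting $n\to\infty$ and using that $g_n\to f$ off the countable jump set of $f$, $Tv\ge f$ a.e.; symmetrically, $w_n-w\ge 0$ gives $Tw\le h_n$, hence $Tw\le f$ a.e.; and $v\le w$ gives $Tv\le Tw$. These force $f\le Tv\le Tw\le f$ a.e., so $Tv=Tw=f$ in $L^1(\d\Omega)$. Consequently $v$ is a function of least gradient with trace $f$, hence a solution of \eqref{lg} (and so is $w$, which need not equal $v$, matching the known loss of uniqueness for discontinuous data).

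It remains to prove \eqref{rtw37}. The inequality $\int_\Omega|Dv|\le(\diam\Omega)\,TV(f)$ was already obtained above. For $\|v\|_{L^\infty}\le\|f\|_{L^\infty}$ I would argue from the least gradient property via the coarea formula: for a.e.\ $t>\|f\|_{L^\infty}$ the superlevel set $\{v\ge t\}$ minimizes perimeter in $\Omega$ among sets with boundary trace $\mathbf 1_{\{f\ge t\}}=0$, so, $\emptyset$ being an admissible competitor with zero perimeter and $\Omega$ being connected, $\{v\ge t\}$ is Lebesgue–null; hence $v\le\|f\|_{L^\infty}$ a.e., and the lower bound is analogous. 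I expect the main obstacle to be the trace identification in the preceding paragraph: the difficulty is to rule out that the monotone limit picks up a strictly smaller (resp.\ larger) boundary value, and what makes it work is precisely the combination of the order preservation of $T$, the two–sided pinch $g_n\le f\le h_n$, and the design of $g_n,h_n$ to converge to $f$ off its jump set; a secondary technical point is the uniform control $\limsup_nTV(g_n)\le TV(f)$, needed both to apply Miranda's theorem and to obtain the sharp constant in \eqref{rtw37}.
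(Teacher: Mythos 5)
Your proposal follows the same overall route as the paper: approximate $f$ from below and above by the sequences of Lemma~\ref{Aprox1}, check admissibility via Lemmas~\ref{gnadm2}, \ref{moDCC}, \ref{moOPC}, solve with \cite[Theorem 3.8]{RySa}, order the solutions with Proposition~\ref{comparison}, pass to monotone limits, and invoke Miranda's theorem \cite{miranda} together with Lemma~\ref{DuleTV} and lower semicontinuity. Two sub-steps are argued differently, both validly. For the trace identification the paper uses the \emph{continuity} of each approximate solution: if $Tu(x)>f(x)$ at a continuity point $x$ of $f$, pick $N$ with $h_N(x)<s$ and use continuity of $u_N$ plus monotonicity of $u_n$ to force $u\le s$ on a whole ball $B(x,r)\cap\Omega$, contradicting $Tu(x)>s$; you instead use order preservation of the trace operator ($h\ge 0$ a.e.\ in $\Omega$ implies $Th\ge 0$ $\cH^1$-a.e.), which gives $Tv\ge g_n$ and $Tw\le h_n$ directly and then sandwiches — this is cleaner and avoids any appeal to continuity of $v_n$, $w_n$. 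For the $L^\infty$ bound the paper reads it off the comparison principle ($\max h_n\ge u_n\ge v_n\ge\min g_n$ with $\max h_n\le\|f\|_{L^\infty}$ by construction), while you use truncation/coarea minimality; again both work. The one place where your sketch is thinner than the paper is the claim $\limsup_n TV(g_n)\le TV(f)$: the partition argument closing Lemma~\ref{DuleTV} relies on the approximants agreeing with $f$ at the fixed partition points of $\bX$, which the $g_n$ of Lemma~\ref{Aprox1} need not do (e.g.\ at jump points of $f$ on hump endpoints or at strict local maxima, where $g_n<f$). The paper instead exploits the explicit convolution structure of $\bar g_n$: the variation off the modified intervals $E_n$ is bounded by $\int|(Df)^\pm\ast\varphi_{1/n}|$, which converges to $TV(f)$ since $(f')^+$ and $(f')^-$ have disjoint supports, while $\int_{E_n}|Dg_n|\to 0$ by continuity of $f$ at the relevant vertices. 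You should replace your partition reference by this (or an equivalent) argument; with that repair the proof is complete.
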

\begin{proof}
Define $C_f=\{x\in \d\Omega: \text{$f$ is continuous at $x$}\}$. By Lemmas \ref{Aprox1}, \ref{gnadm2}, \ref{moDCC} and \ref{moOPC} we construct a decreasing sequence of continuous functions $h_n$ such that 
$h_n\to f$ in $C_f$. For each $h_n$, by \cite[Theorem 3.8]{RySa}, there exists a continuous solution $u_n$ to \eqref{lg} on $\Omega$, with trace $h_n$. By the comparison 
principle in Proposition \ref{comparison}, we deduce that $u_n$ is a decreasing sequence. Then, these functions converge to a function $u$ at every point $x\in \Omega$. By 
\cite{miranda}, $u$ is a least gradient function. 

By a similar token, by Lemmas \ref{Aprox1}, \ref{gnadm2}, \ref{moDCC} and \ref{moOPC} we construct an increasing sequence of continuous functions $g_n$ such that 
$g_n\to f$ in $C_f$. For each $g_n$, by \cite[Theorem 3.8]{RySa}, there exists a continuous solution $v_n$ to \eqref{lg} on $\Omega$ with trace $g_n$. By the comparison 
principle, in Proposition \ref{comparison}, we have that $v_n$ is an increasing sequence, converging to a function $v$ at every point $x\in \Omega$. By 
\cite{miranda}, $v$ is a least gradient function.

We shall prove that 
$$
Tu(x) = f(x) = T v(x)\qquad\hbox{for } x\in C_f.
$$
We claim $f(x)\ge Tu(x)$ for $x\in C_f$.
If it were otherwise, $Tu(x)=t>f(x)=\tau$, then there would exist $s  \in (\tau,t)$.
Since $h_n(x)\to f(x)$ then, there exists $N$ such that $h_N(x)<s$. By the continuity of $u_N$, we gather that $u_N(y)<s$ for every $y$ in a neighborhood of $x$ in $\Omega$, we may assume that this is a ball $B(x,r).$
Since $u_n$ is a decreasing sequence, then $u_n(y)<s$ for all $n\geq N$. Letting $n\to \infty$, we get that $u(y)\leq s$ for all $y\in B(x,r)\cap \Omega$, contradicting the fact that $Tu(x)=t>s$. 

Now, we consider  sequence $g_n$  converging to $f$ from below and the corresponding solutions $v_n$ to the least gradient problem. The sequence $v_n$ converges to a least gradient function $v$.

The same argument as above implies that
$ Tv(x) \ge f(x)$.
Since we automatically have that 
$$
u_n(x) \ge u(x) \ge v(x) \ge v_n$$
we deduce from the above inequalities that
$$
f(x) = Tu(x) = Tv(x) = f(x)\qquad\hbox{for }x\in C_f.
$$
Since $C_f\subset \Omega$ has the full measure, we deduce that $T u = f$, as desired. The same argument yields $T v = f$.

We can draw further conclusions from the comparison principle. Namely, we infer directly that
$$
\max h_n \ge u_n \ge v_n \ge \min g_n.
$$
By the definition of $h_n$ and $g_n$ we deduce that
$$
\| f\|_{L^\infty} \ge \max h_n \ge \min g_n \ge - \| f\|_{L^\infty}.
$$
Hence,
$$
\| f\|_{L^\infty} \ge \|u\|_{L^\infty}\quad\hbox{ and } \quad
\| f\|_{L^\infty}\ge \| v\|_{L^\infty}.
$$

It remains to prove the estimate on the total variation of solutions. Since $g_n$ is an increasing sequence of continuous functions, satisfying the admissibility conditions, DCC and OPC, then due to Lemma \ref{DuleTV} we have
$$
\int_\Omega |Dv_n| \le \diam\Omega\, TV(g_n).
$$
The lower semicontinuity of the total variation yields $|D u|(\Omega)\le \liminf_{n\to\infty} |Dv_n|(\Omega)$. We have to estimate $TV(g_n).$ 
If we recall  
the definition of $g_n$, then we see that,  
$$
TV(g_n) = BV(g_n) =\int_{[0,L)\setminus E_n} | D \bar g_n | + \int_{E_n} |D g_n|,
$$
where 
$$
E_n = \bigcup \left[v_j - \frac3n , v_j+\frac3n\right],
$$
where $v_j$'s are those vertices, where $f$ attains a local maximum.
We recall that we identify a vertex $v_j$ with a point in $[0,L)$ denoted in the same way. Then, we have
$$
\int_{[0,L)\setminus E_n} | D \bar g_n | \le 
 \int_0^L\left| (Df)^+*\varphi_{1/n})(\cdot - \frac1n)\right| + \int_0^L\left| (Df)^-*\varphi_{1/n})(\cdot + \frac1n)\right| .
 $$
 This is so because $Df^+$ and $Df^-$ have disjoint supports. Now,  \cite[Theorem 2.2, page 42]{AFP} implies that the right-hand-side above converges to $TV(f)$.

Now, we  shall show that  $\int_{E_n} |D g_n|$ goes to zero, when $n\to \infty$. Indeed,
\begin{eqnarray*}
\int_{v_j-\frac 3n}^{v_j+\frac 3n} |D g_n| &=& \left(\gamma_n - \bar g_n\left(v_j - \frac 3n\right)\right) + 
\left(\gamma_n - \bar g_n\left(v_j + \frac 2n\right)\right) \\ &\le &
\left(\bar g_n\left(v_j - \frac 2n\right) - \bar g_n\left(v_j - \frac 3n\right) \right)+
\left(\bar g_n\left(v_j + \frac 2n\right) - \bar g_n\left(v_j + \frac 3n\right)\right).
\end{eqnarray*}
Since $f$ is continuous at each point $v_j$, we conclude that the right-hand-side above converges to zero.  Hence,
$$
\int_\Omega |Dv| \le \diam\Omega\, TV(f).
$$
The same argument applies to $h_n$'s and $u$ yielding,
$$
\int_\Omega |Du| \le \diam\Omega\, TV(f).
$$
Even if there are other solution, they satisfy the above inequality, because if $w$  violates it, then it cannot be a minimizer.
Our claim follows.
\end{proof}

\begin{remark}
 In the course of the above proof, we constructed two solutions $u$ and $v$ to the least gradient problem having the trace at the boundary. However, they need not be equal. The Brothers example, see \cite{mazon}, shows that actually  the non-uniqueness may be quite severe.
\end{remark}

We state a useful observation, which is known for continuous data.
\begin{cor}\label{c39}
Let us suppose that the hypothesis of the previous theorem hold and $[a,b]$ is a closure of a hump $I.$ If $y$ and $z$ are defined by (\ref{ddfyz}), then $u(x)=e$ for all $x$ in the interior of $\conv(a,b,y,z)$,  where $e= f(I)$ and $\conv(a,b,y,z)$ stands for the convex envelope of point $a,b,y$ and $z$.
\end{cor}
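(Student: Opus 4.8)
The plan is to deduce Corollary \ref{c39} from Theorem \ref{Main2} together with the analogous statement for continuous data, which is implicitly available through the approximation machinery. First I would recall that for continuous data satisfying C1, C2, OPC and DCC on a polygon with finitely many sides, it is a standard fact (and was established in \cite{RySa} in the course of proving the existence theorem) that the solution is constant, equal to $f(I)=e$, on the interior of the convex hull $\conv(a,b,y,z)$: the two segments $[a,y]$ and $[b,z]$ are minimal level-set components of the solution, they bound a region on which $u$ cannot take any other value without creating either a crossing of level sets (ruled out by OPC) or a contradiction with the value $e$ being a local extremum (this is exactly what DCC, inequality (\ref{dcc1}), prevents). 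I would state this as a preliminary observation for the continuous approximants $g_n$ and $h_n$ built in Lemma \ref{Aprox1}.

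Next, I would use the approximating sequences from the proof of Theorem \ref{Main2}. For the hump $\bar I=[a,b]$ of $f$, by Lemmas \ref{gnadm2}, \ref{moDCC}, \ref{moOPC} the continuous functions $h_n$ (decreasing to $f$) satisfy D2, DCC, OPC, so by the continuous version of the corollary the solution $u_n$ with trace $h_n$ is constant, equal to $e_n := h_n((a_i^{h_n},b_i^{h_n})) = e$ (recall from Step~1 of the proof of Lemma \ref{moDCC} that the hump value is preserved, $h_n$ on its hump equals $f(I)=e$), on the interior of $\conv(a^{h_n},b^{h_n},y^{h_n},z^{h_n})$. In the proof of Lemma \ref{moOPC} it is shown that $a^{h_n}\to a$, $b^{h_n}\to b$, $y^{h_n}\to y$, $z^{h_n}\to z$; hence for any compact set $C$ contained in the open convex hull $\Int\conv(a,b,y,z)$, we have $C\subset\Int\conv(a^{h_n},b^{h_n},y^{h_n},z^{h_n})$ for all sufficiently large $n$, so $u_n\equiv e$ on $C$. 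Since $u_n\downarrow u$ pointwise in $\Omega$ (as established in the proof of Theorem \ref{Main2}), we get $u\equiv e$ on $C$, and letting $C$ exhaust the open convex hull gives $u(x)=e$ for all $x\in\Int\conv(a,b,y,z)$.

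The same argument run with the increasing approximants $g_n$ and their solutions $v_n$ gives $v\equiv e$ on $\Int\conv(a,b,y,z)$ as well, so the conclusion holds for both of the explicitly constructed solutions. If one wants it for an arbitrary solution $u$ of (\ref{lg}), I would note that any solution is squeezed between $v$ and $u$ in the comparison ordering on each approximant level, i.e. $v_n\le w\le u_n$ need not hold for a general minimizer $w$, so this step is slightly delicate; the cleanest route is to invoke that on the convex hull both the maximal and minimal solutions agree and equal $e$, and that every minimizer lies between the minimal and maximal solution — or simply to state the corollary for the constructed solution, which is all that is needed downstream. The main obstacle I anticipate is pinning down precisely the continuous-data version of the statement and the fact that the hump value $e$ is exactly preserved by $g_n$ and $h_n$ on the (shrinking) approximate hump; both are available — the former from \cite{RySa}, the latter from the explicit construction in Lemma \ref{Aprox1}(3) and the identity $g_n((a^n_g,b^n_g))=h_n((a^n_h,b^n_h))=f((a,b))=e$ recorded in the proof of Lemma \ref{moDCC} — so the corollary follows by passing to the limit.
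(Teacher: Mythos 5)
Your overall strategy---prove the constancy for the continuous approximants and then pass to the limit using $a^n\to a$, $b^n\to b$, $y^n\to y$, $z^n\to z$ together with the pointwise convergence $u_n\to u$---is exactly the paper's, and that limiting step is fine. The gap is at the central step: you assert that for continuous admissible data the solution equals $e$ on the interior of $\conv(a,b,y,z)$ as ``a standard fact\dots established in \cite{RySa}.'' The paper imports from \cite{RySa} (Lemma 3.5 combined with Proposition 3.9 there) only the much weaker statement that $u_n$ equals $e$ on the two boundary segments $[\alpha_n,y_n]$ and $[\beta_n,z_n]$ and on the hump itself; the interior constancy is precisely what the proof of the corollary has to establish, and your heuristic justification (``level sets cannot cross,'' ``DCC prevents other values'') is not a proof of it. The actual argument is variational: set $\tilde Q_n=\Int(\conv(\alpha_n,\beta_n,y_n,z_n))$ and compare $u_n$ with the competitor $\tilde u_n$ obtained by overwriting $u_n$ with the constant $e$ on $\tilde Q_n$. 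The DCC gives $Tu_n-e\ge 0$ on $[y_n,z_n]$, so the new jump along $[y_n,z_n]$ costs $\int_{[y_n,z_n]}(Tu_n-e)\,d\cH^1$; on the other hand, the coarea formula and the fact that for $t>e$ the set $\{u_n\ge t\}\cap\overline{\tilde Q}_n$ cannot meet the three sides $[\alpha_n,y_n]\cup[\alpha_n,\beta_n]\cup[\beta_n,z_n]$ force $P(\{u_n\ge t\},\tilde Q_n)>\cH^1(\{u_n\ge t\}\cap[y_n,z_n])$ for $t$ in a set of positive measure whenever $u_n\not\equiv e$ on $\tilde Q_n$; integrating in $t$ then contradicts the minimality of $u_n$. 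Without this (or an equivalent) argument your proof rests on an unproved lemma.

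A secondary remark: your closing discussion of arbitrary minimizers is unnecessary. The corollary concerns the solution $u$ constructed in Theorem \ref{Main2} as the pointwise limit of the $u_n$, and the paper proves the statement only for that $u$; once the continuous-data constancy is actually established, your compact-exhaustion limit step for that $u$ is correct as written.
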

\begin{proof}
We take $g_n$ approximating $f$ and a hump $[\alpha_n,\beta_n]$ corresponding to $I$, i.e. 
$$
\lim_{n\to\infty}\alpha_n=a,\qquad \lim_{n\to\infty} \beta_n = b.
$$
We also take $y_n$ (resp. $z_n$) corresponding to $y$ (resp. $z$). We notice that the proof of \cite[Lemma 3.5]{RySa} combined with \cite[Proposition 3.9]{RySa} yields that,
$$
u_n ([\alpha_n, y_n]) = e = u_n([\beta_n,  z_n]),
$$
where $u_n$ is the solution to (\ref{lg}) with datum $g_n.$
By construction we have $u_n([\alpha_n, \beta_n]) =e.$

We set $\tilde Q_n:=\hbox{int}(\conv(\alpha_n,\beta_n, y_n, z_n))$. We  claim that for all $x\in \tilde Q_n$ we have $u_n(x) =  e$. If this were not true, then we could find $\tilde u_n$ with the total variation smaller than that o $u_n$. Indeed, we set
$$
\tilde u_n(x)=
\left\{
\begin{array}{ll}
u_n(x)   & x\in \Omega\setminus \tilde Q_n, \\
e  &  x\in \tilde Q_n.
\end{array}
\right.
$$
Then, we compute
$$
\int_\Omega | D \tilde u_n | = \int_{\Omega\setminus\tilde Q_n} | D \tilde u_n | + \int_{[y_n,z_n]}(T u_n -e)\,d\cH^1,
$$
where 
$T  u_n$ denotes the trace of $ u_n$ over $[y_n,z_n]$. 
Due to the DCC we know that $T u_n -e \ge 0.$

At the same time continuity of $u_n$ 
implies that 
$$
\int_\Omega |Du_n| = \int_{\Omega\setminus\tilde Q_n} |Du_n|
+ \int_{\tilde Q_n} |Du_n|.
$$
The co-area formula yields,
$$
\int_{\Omega\setminus\tilde Q_n} |Du_n| =
\int_e^{m} P(\{u_n \ge t\},\tilde Q_n)\,dt,
$$
where $m = \max_{[y_n,z_n]} Tu_n$. Since 
$$
\{ u_n|_{\overline {\tilde Q}_n} \ge t \} \cap ([\alpha_n, y_n]
\cup [\alpha_n, \beta_n] \cup [\beta_n, z_n]) = \emptyset
$$
for $t>e$, we deduce that
\begin{equation}\label{rc39}
P(\{u_n \ge t\},\tilde Q_n) 
> \cH^1( \{u_n \ge t\} \cap [y_n,z_n])
\end{equation}
for $t$ from a set of positive measure contained in $(e,m]$. If we integrate (\ref{rc39}) over $[e,m]$, then we see
$$
\int_{\tilde Q_n}|Du_n| >
\int_e^m \cH^1( \{u_n \ge t\} \cap [y_n,z_n])\, dt
=  \int_{[y_n,z_n]}(T u_n(x) -e)\,d\cH^1(x).
$$
But this contradicts minimality of $u_n$, contrary to our assumption. As a result, $u_n(x) = e$ for all  $x\in \tilde Q_n.$ If $x$ is in the interior of $\conv(a,b,y,z)$, then for a sufficiently big, $n$ we have $x\in Q_n$.
Since $u$ is a pointwise limit of $u_n$, then we deduce,
$$
e = \lim_{n\to \infty} u_n(x) = u(x),
$$
as desired.
\end{proof}

\subsection{Discontinuous data when $\d\Omega$ has infinitely many sides and a finite number of humps}

In this section we prove our second main results. It is an analogue of \cite[Theorem 3.10]{RySa}, where we had continuous data.
\begin{theorem}\label{td-nsk}
Let us suppose that $\Omega$ is an open, bounded and convex set, whose boundary,  $\d\Omega$ has an infinite  number of sides. We assume that the number of humps  is finite. In addition,
there exists exactly one point $p_0$ being an endpoint of a side $\ell_0$, which is an accumulation point of the sides of $\d\Omega$. We assume  that $f\in BV(\d\Omega)$ and it satisfies the
admissibility conditions D1 or  D2 on all sides of $\d\Omega$, D3  and the Order Preserving Condition (\ref{OPC}) and the Data Consistency Condition, (\ref{dcc1}-\ref{dcc2}).
Finally, 
$f$ attains a strict local maximum or minimum at $p_0$, which is a continuity point of $f$ and 
there is $\rho>0$, such that $f$ restricted to each component of $(B(p_0,\rho)\cap \d\Omega) \setminus\{p_0\}$ is monotone. Then, problem (\ref{lg}) has a solution  $u\in BV(\Omega)$ and
\begin{equation}\label{rtw310}
\| u \|_{L^\infty} \le \| f \|_{L^\infty}  \qquad\hbox{and}\qquad
\int_\Omega |Du | \le \diam \Omega \, TV(f).
\end{equation}
\end{theorem}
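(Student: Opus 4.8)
\textbf{Proof proposal for Theorem \ref{td-nsk}.}

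The plan is to approximate the domain $\Omega$ from inside by convex polygons $\Omega_n$ with finitely many sides, transport the datum $f$ to $\d\Omega_n$, apply Theorem \ref{Main2} on each $\Omega_n$, and pass to the limit using the $BV$ estimate \eqref{rtw37}, which is uniform in $n$. More precisely, near the accumulation point $p_0$ I would ``cut the corner'' at scale $1/n$: replace the infinitely many small sides accumulating at $p_0$ by a single chord (or by finitely many chords), obtaining a convex polygon $\Omega_n\subset\Omega$ with finitely many sides such that $\bar\Omega_n\to\bar\Omega$ in the Hausdorff distance and such that all the (finitely many) humps of $f$, as well as the side $\ell_0$ and the points $y_i,z_i$ attached to humps, survive on $\d\Omega_n$ for $n$ large. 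Let $\pi_n:\d\Omega\to\d\Omega_n$ be the natural projection (e.g.\ radial from an interior point, or nearest-point onto $\bar\Omega_n$), and set $f_n = f\circ\pi_n^{-1}$ on $\d\Omega_n$, suitably adjusted near the new chord so that $f_n$ is monotone there — this is possible because $f$ restricted to each component of $(B(p_0,\rho)\cap\d\Omega)\setminus\{p_0\}$ is monotone and $f$ is continuous at $p_0$. One must check that $f_n\in BV(\d\Omega_n)$ satisfies D1, D2, D3, OPC and DCC with finitely many humps: the humps of $f_n$ are exactly the humps of $f$ for large $n$ (the corner-cutting does not create new humps, since $f$ is strictly monotone near $p_0$), the companion points $y_i,z_i$ lie on sides that persist, and the compatibility conditions are inherited from those for $f$ because the relevant arcs $\overline{y_iz_i}_{a_ib_i}$ are eventually contained in the persisting part of $\d\Omega$. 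Then Theorem \ref{Main2} gives a solution $u_n$ to \eqref{lg} on $\Omega_n$ with $\|u_n\|_{L^\infty}\le\|f_n\|_{L^\infty}\le\|f\|_{L^\infty}$ and $\int_{\Omega_n}|Du_n|\le\diam\Omega_n\,TV(f_n)\le\diam\Omega\,TV(f_n)$.

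The heart of the argument is the passage to the limit. Extend each $u_n$ by a constant (say $\min f$) to all of $\Omega$; the uniform bounds $\|u_n\|_{L^\infty}\le\|f\|_{L^\infty}$ and $\int_\Omega|Du_n|\le\diam\Omega\,\limsup_n TV(f_n)$ together with $\limsup_n TV(f_n)\le TV(f)$ (proved as in Lemma \ref{DuleTV}, Step 2, by comparing against a partition through the fixed points of $\bX$, which are eventually common to all $f_n$) give, via $BV$ compactness, a subsequence converging in $L^1(\Omega)$ and a.e.\ to some $u\in BV(\Omega)$ with $\int_\Omega|Du|\le\diam\Omega\,TV(f)$ by lower semicontinuity and $\|u\|_{L^\infty}\le\|f\|_{L^\infty}$. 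By Miranda's theorem (applied on compactly contained subdomains, or directly since the $u_n$ are least gradient on $\Omega_n\supset$ any fixed compact set for $n$ large) $u$ is a least gradient function. The remaining, and genuinely delicate, point is the trace: one must show $Tu=f$. Away from $p_0$ this follows by the same localization as in the proof of Theorem \ref{Main2} — at a continuity point $x$ of $f$ in the interior of a persisting side, $u_n$ is trapped between continuous sub/supersolutions whose traces converge to $f(x)$, so $Tu(x)=f(x)$; at humps, Corollary \ref{c39} pins $u_n\equiv e$ on a fixed quadrilateral $\conv(a,b,y,z)$ for $n$ large, so $u\equiv e$ there and the trace on the hump is correct. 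Near $p_0$ one uses the strict extremum: since $f$ attains a strict local max (or min) at $p_0$ and is monotone on each side of $p_0$, barrier/comparison functions built from the level sets through points close to $p_0$ force $u$ to take the value $f(p_0)$ in a neighborhood of $p_0$ inside $\Omega$, which gives the trace at $p_0$; because $f\in BV(\d\Omega)$ has at most countably many discontinuities and $C_f$ has full $\cH^1$-measure on $\d\Omega$, matching the trace $\cH^1$-a.e.\ suffices.

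The main obstacle I anticipate is precisely the trace identification at and near the accumulation point $p_0$: the approximating solutions $u_n$ live on domains $\Omega_n$ whose boundaries differ from $\d\Omega$ exactly in the region where control is weakest, so one cannot simply quote the convergence $Tu_n=f_n\to f$. I would handle this by exploiting that $p_0$ is a strict extremum and a continuity point — this is exactly the hypothesis that rules out oscillation of level sets accumulating at $p_0$ — constructing explicit comparison barriers (sublevel sets of $u_n$ are perimeter-minimizing, hence are line segments, and the strict monotonicity of $f$ near $p_0$ forces these segments to have endpoints tending to $p_0$), and then using the comparison principle of Proposition \ref{comparison} together with Corollary \ref{c39}-type localization to sandwich $u$ near $p_0$ between functions with the correct trace. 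A secondary technical point is verifying that the corner-cutting construction can be done while preserving \emph{all} admissibility conditions simultaneously for large $n$; this is routine but must be checked hump-by-hump and vertex-by-vertex as in Lemmas \ref{gnadm2}, \ref{moDCC}, \ref{moOPC}.
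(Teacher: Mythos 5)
Your overall strategy is the same as the paper's: cut off the corner at the accumulation point $p_0$ along a chord $L_n$, solve on the resulting finite\mbox{-}sided convex polygon by Theorem \ref{Main2}, and pass to the limit using the uniform bounds \eqref{rtw37}. The genuine gap sits exactly where you flag the main obstacle, the identification $Tu=f$, and it is created by two of your technical choices. First, the datum on the chord: the paper picks the chord endpoints $x_n\in\ell_0$ and $y_n$ on the opposite branch to be \emph{continuity points of $f$ with $f(x_n)=f(y_n)$} (possible precisely because $f$ is monotone on each branch and continuous at the strict extremum $p_0$), and puts the constant value $f(x_n)$ on $L_n$. This matched\mbox{-}level choice is what forces the approximate solution to be constant on the cut\mbox{-}off region and continuous across $L_n$, so that it glues to a function $u_n$ on all of $\Omega$ with no jump along $L_n$ and with $|Du_n|(\Omega)$ an increasing sequence. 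Your prescription --- a ``suitably adjusted'' monotone datum on the chord combined with extending $u_n$ by the unrelated constant $\min f$ --- destroys exactly this: the extended function has a spurious jump along $L_n$, is not of least gradient on $\Omega$, and the monotonicity of the total variations is lost.

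Second, and as a consequence, you have no working mechanism for the trace. The paper squeezes the increasing sequence $|Du_n|(\Omega)\le |Du|(\Omega_n)$ against $|Du|(\Omega_n)\nearrow|Du|(\Omega)$, concludes that $u_n\to u$ \emph{strictly} in $BV(\Omega)$, and then invokes the continuity of the trace operator under strict convergence. Lower semicontinuity, which is all you use, gives only $\int_\Omega|Du|\le\liminf_n\int_\Omega|Du_n|$ and says nothing about $Tu$. Your substitute arguments are not available as stated: the sandwich between continuous sub/supersolutions away from $p_0$ would need a comparison principle across the \emph{different} domains $\Omega_n$, whereas Proposition \ref{comparison} is proved only for a fixed polygon with finitely many sides and continuous data; and the barrier argument near $p_0$ (``level sets are segments with endpoints tending to $p_0$'') is asserted, not constructed. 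To close the gap, adopt the matched\mbox{-}level constant datum on $L_n$, prove continuity of the approximate solutions across $L_n$ (as in Step 2 of the paper's proof), extend by the chord value $f(x_n)$ rather than $\min f$, and run the strict\mbox{-}convergence argument; the set $\{p_0\}$ being $\cH^1$\mbox{-}null then takes care of the single remaining boundary point.
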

\begin{remark}
  Here, Corollary \ref{cr1} does not apply, because of the infinite number of sides.
\end{remark}
\begin{proof}
The argument is based on the approximation of the problem in question by problems we dealt with in the proof of Theorem \ref{Main2}. We used a similar technique in  \cite[Theorem 3.10]{RySa}.
Here, however, we face an additional difficulty, which is the  lack of uniqueness of solutions to (\ref{lg}).

{\it Step 1.} We begin with a construction of a sequence of convex sets, $\Omega_n$, such that $\d\Omega_n$ is a polygon having a finite number of sides. For $\rho$ given in the statement of the theorem, we consider all sides of $\d\Omega$, $\{\ell_k\}_{k=1}^\infty$, contained in $B(p_0,\rho)$. Possibly after further restriction of $\rho$, we may assume that if $x_0\in (B(p_0,\rho)\setminus B(p_0,\frac{\rho}{2}))\cap\ell_0$  is a continuity point, then $y_0\in\d\Omega$, the closest point to $x_0$ such that $f(x_0) = f(y_0)$, belongs to $B(p_0,\rho)\setminus\ell_0 $ and $y_0$ is also a continuity point. 

We define sequences $\{x_n\}_{n=1}^\infty$ and $\{y_n\}_{n=1}^\infty$ inductively.
Due to our assumptions on the behavior of $f$ near $p_0$, we can pick
$x_n\in B(p_0,\frac{\rho}{2^n})\setminus B(p_0,\frac{\rho}{2^{n+1}})\cap\ell_0$ a continuity point of $f$ and $y_n\in \bigcup_{k=1}^\infty \ell_k$, $|y_n-p_0|<|y_{n-1}-p_0|$,  a continuity point of $f$, and $f(x_n)=f(y_n)$.  

We set $L_n=[x_n,y_n]$ and $H(L_n,p_0)$ to be the closed half-plane containing $p_0$ whose boundary contains the line segment $L_n$.
We define $\Omega_n=\Omega\setminus H(L_n, p_0)$, $n \in \mathbb{N}$
and
$$
f_n(x)=
\begin{cases} f(x), & x\in \partial \Omega\cap \partial \Omega_{n},\\ f(x_{n}), & x\in L_n .
\end{cases}
$$ 
We see that D2 and  D3 hold and we claim that $f_n$ satisfies the D1  admissibility conditions. Indeed, we could consider $\rho$ so small that there is no hump in the ball $B(p_0,\rho)$. As a result $f|_{B(p_0,\rho)\cap \ell}$ is monotone. The side of $\d\Omega$ containing $y_n$ will be called $\ell_{k_n}$, because not all sides $\ell_k$ need to be selected. In any case,  $f$ restricted to $\ell_{k_n}$ is monotone. Since $f_n = f(x_n)$ on $L_n$ we see that $f_n$ satisfies D1 on $L_n$. 
Moreover, by the choice of $x_n$ and $y_n$, functions $f_n$ satisfy the Order Preserving and Data Consistency  Conditions. 

We have to perform an additional step before invoking  Theorem \ref{Main2}. Let us denote by $L(\ell_i)$ the line containing segment $\ell_i$. We set  $p_n = L(\ell_0) \cap L(\ell_{k_n}).$ Furthermore, if we denote a convex envelope of set $A$ by $\conv (A)$, then we define a convex set  $\widetilde\Omega_n = \Omega_n \cup \conv(p_n, L_n)$ 
and
$$
\tilde f_n(x)=
\begin{cases} f_n(x) & x\in \partial \widetilde\Omega_n\cap \partial \Omega_{n},\\ f(x_{n}) & x\in \p \widetilde\Omega_n\setminus \partial \Omega_{n}.
\end{cases}
$$ 
{\it Step 2.} The above
definition yields a function $\tilde f_n$  satisfying D1 on $\p\widetilde\Omega_n\setminus \partial \Omega_{n}$. We conclude that $\widetilde\Omega_n$ and $\tilde f_n$ satisfy the assumptions of Theorem \ref{Main2}, yielding a solution $w_n$ to (\ref{lg}) on $\tilde \Omega_n$ with  trace $\tilde f_n$.

We claim that 
due to the continuity of data on $L_n$, solution $w_n$ is continuous at points belonging to $L_n$. 
Indeed, let us suppose that $\tilde\Omega_n\ni \xi_k \to \xi\in L_n$ as $k\to \infty$. By the choice of $\tilde f_n$ we see that $w_n= f(x_n)$ on $\tilde\Omega_n\setminus \Omega_n$. Hence, it is sufficient to consider $\xi_k \in \Omega_n$. For sufficiently large $k$ points $\xi_k$ belong to $\Omega_n\setminus \Omega_{n-1}$. Hence, $w_n(\xi_k)\ge f(x_{n-1})$. This implies that 
$$
\d\{ x: \ w_n(x)\ge w_n(\xi_k)\} \subset \Omega_n\setminus \Omega_{n-1}.
$$
We set 
$$\{\eta^k_1, \eta^k_2\} = \d\{ x: \ w_n(x)\ge w_n(\xi_k)\} \cap 
\d \Omega_{n}.
$$
Of course $\eta^k_i \to \eta_i$, $i=1,2$, when $k\to \infty.$ We claim that $\{\eta_1, \eta_2\}$ intersects $\{x_n, y_n\}$, otherwise $\dist ([\eta_1, \eta_2], L_n)>0$, but this contradicts convergence of $\xi_k$ to $\xi\in L_n$. 

Since $\{\eta_1, \eta_2\}\cap\{x_n, y_n\}\neq\emptyset$, then $\lim_{k\to\infty} w_n (\xi_k) = f(x_n)$, as desired.
As a result, if we set $u_n = w_n \chi_{\Omega_n} + f(x_n) \chi_{\Omega \setminus \Omega_n}$, then
\begin{equation}\label{n1}
    \int_\Omega |D u_n| = \int_{\Omega_n} |Du_n| 
    =\int_{\Omega_n} |D w_n|.
\end{equation}
{\it Step 3.} We claim that $u_n$ are uniformly bounded in $BV$. For this purpose we notice that
$$
\| w_n \|_{L^\infty} = \| u_n \|_{L^\infty} .
$$
Since $w_n$ is a solution to (\ref{lg}) in $\tilde\Omega_n$ with datum $\tilde f_n$, we may now recall (\ref{rtw37}). This yields
\begin{equation}\label{n2}
  \|w_n\|_{L^\infty} \le \| \tilde f_n\|_{L^\infty}\le
  \|  f\|_{L^\infty}
\end{equation}
and
$$
\int_{\Omega_n} |Dw_n| \le TV( \tilde f_n) \diam \tilde\Omega_n
\le TV( f) \diam \tilde\Omega_1,
$$
where the estimate $TV(\tilde f_n )\le TV (f)$ follows just from the definition of $\tilde f_n$. Finally,
\begin{equation}\label{n7}
\| u_n \|_{L^\infty}\le  \|  f\|_{L^\infty}\qquad\hbox{and}\qquad
\int_\Omega |D u_n| \le TV( f) \diam \tilde\Omega_n \le \tilde M.
\end{equation}
This estimate implies that we extract a subsequence (not relabeled) $u_n$ 
converging to $u$, an element of $BV(\Omega).$ Moreover, the lower semicontinuity of the total variation implies that
\begin{equation}\label{n2}
M= \varliminf_{n\to\infty} \int_\Omega |D u_n| \ge \int_\Omega |Du|. 
\end{equation}

{\it Step 4.} We claim that the convergence of $u_n$ to $u$ is strict. For this purpose we will establish that the sequence $|Du_n|(\Omega)$ is increasing. Since we have already observed in Step 2 that $u_{n+1}$ is continuous at points on $L_n$, then we see
$$
\int_\Omega  |Du_{n+1}| = \int_{\Omega\setminus\Omega_{n}} |Du_{n+1}| +
\int_{\Omega_{n}} |Du_{n+1}|.
$$
Also in Step 2. we noticed that
$$
\int_{\Omega_{n}} |Du_{n+1}| = \int_{\Omega_{n}} |Du_{n}|,
$$
hence the 
sequence $| D u_n|(\Omega)$ is increasing. 

We can compare $\int_{\Omega_n}  |Du_n|$ and $\int_{\Omega_n}  |Du|$. The continuity of $u_n$ across $L_n$ implies
$$
\int_\Omega |D u_n| = \int_{\Omega_n}  |Du_{n}|
$$
Since $u_n$ is a solution to the least gradient problem (\ref{lg}) in $\Omega_n$ and with the datum $f_n$, then we have
$$
\int_{\Omega_n}  |Du_{n}|\le \int_{\Omega_n}  |Du| \equiv |Du|(\Omega_n).
$$
Moreover, since the sequence of sets $\{\Omega_n\}_{n=1}^\infty$ is increasing and $|Du|$ is a Radon measure, then we see
$$
\lim_{n\to \infty} |Du| (\Omega_{n}) = |Du| (\Omega).
$$
Combining this with (\ref{n2}), we conclude,
$$
M \ge \lim_{n\to \infty} |Du| (\Omega_{n}) \ge \lim_{n\to \infty} \int_\Omega |Du_{n}| = M.
$$
This means that $u_n$ converges to $u$ in the strict sense.

{\it Step 5.} Now, we could use the continuity of the trace with respect to the strict convergence to claim that $T u = f$.  Alternatively, if $x_0\in \d\Omega$ and $B(x_0,\epsilon)$ is any ball not containing $p_0$, then for sufficiently large $k$ we have
$$
Tu |_{\d\Omega\cap B(x_0,\rho)} = Tu|_{\d\Omega_k\cap B(x_0,\rho)} = 
f_k |_{\d\Omega\cap B(x_0,\rho)} = f |_{\d\Omega\cap B(x_0,\rho)}.
$$
Hence, $T u (x) = f(x)$ for $\cH^1$-a.e. $x\in \d\Omega$.

{\it Step 6.} We claim that $u$ is a least gradient function. Indeed, since  $u$ is an $L^1$ limit of least gradient functions in $\Omega$, then by Miranda Theorem, see \cite{miranda}, $u$ is a least gradient function. Combining this with Step 5, we see that $u$
is a solution to (\ref{lg}).

Finally, we establish estimates on the solution we constructed. Since $u_n $ converges to $u$ a.e., then (\ref{n7}) implies that
$$
\| u \|_{L^\infty} \le \| f \|_{L^\infty} .
$$
Moreover, since $\diam \tilde \Omega_n$ converges to $\diam \Omega,$ then (\ref{n7}) and (\ref{n2}) imply (\ref{rtw310}).
\end{proof}

 \subsection{Discontinuous data with infinitely many humps}
 We assume in this section that the data  have infinitely many humps on sides of $\p \Omega$. To be specific, we assume $\ell=[p_l,p_r]$ is a side of $\p\Omega$, 
 where $f$ has infinitely many humps $I_i$, $i\in\cI$, then since $f$ satisfies the admissibility condition D2 the lengths of $I_i$ must converge to 0 and the humps endpoints must converge to the one of the endpoints of $\ell$. For the sake of 
 definiteness, we assume that $p_l$ is the point of accumulation of $I_i$'s endpoints.

When we deal with an infinite number of humps, then we assume that they accumulate at a single point $p_0$.  Moreover, we assume that a side of $\d\Omega$ contains an infinite number of them and $p_0$ is its endpoint. 
This restriction is introduced solely for the sake of simplicity of the exposition. The same argument works if we have a finite number of accumulation points.

The above assumptions imply that either another facet contains infinitely many humps or  an infinite number of sides contains a finite number of humps.
Here is our main observation.

\begin{theorem}\label{Main3}
Let us suppose that  $\Omega$ is a convex polygonal domain. Function $f$ is in $BV(\partial\Omega)$, it satisfies conditions D1 or D2, D3, the Order Preserving and Data Consistency conditions. There is a side $\ell$ containing an infinite number of humps accumulating at $p_0$, an end point of $\ell$.
Then, there exists a solution $u$ to the least gradient problem (\ref{lg}). Moreover,
\begin{equation}\label{rtw312}
\| u \|_{L^\infty} \le \| f  \|_{L^\infty}, \qquad
\int_\Omega |Du| \le \diam \Omega\, TV (f).    
\end{equation}
 \end{theorem}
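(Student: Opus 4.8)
The plan is to mimic the approximation scheme used in Theorem~\ref{td-nsk}, but now approximating away the accumulation of humps at $p_0$ rather than the accumulation of sides. First I would truncate $\Omega$ near $p_0$: for a small $\rho>0$ there is no hump outside $B(p_0,\rho)$ except possibly those on $\ell$ accumulating at $p_0$. Using the strict monotonicity of $f$ on the complement of the humps near $p_0$ (which follows from D3, Remark~\ref{mono}), I pick continuity points $x_n\in\ell$ with $|x_n-p_0|\to 0$, chosen so that $x_n$ lies between two consecutive humps, and a matching point $y_n\in\d\Omega\setminus\ell$ (a continuity point) with $f(x_n)=f(y_n)$, exactly as in Theorem~\ref{td-nsk}. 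Cutting $\Omega$ along $L_n=[x_n,y_n]$ and (if necessary) filling in a convex corner $\conv(p_n,L_n)$ as before, I obtain convex polygonal domains $\widetilde\Omega_n$ on which the restricted, flattened datum $\tilde f_n$ has only \emph{finitely} many humps and satisfies D1 or D2, D3, OPC and DCC. Theorem~\ref{Main2} then gives a solution $w_n$ on $\widetilde\Omega_n$ with $\|w_n\|_{L^\infty}\le\|f\|_{L^\infty}$ and $\int|Dw_n|\le\diam\widetilde\Omega_n\,TV(f)$, and I extend it by the constant $f(x_n)$ to a function $u_n$ on $\Omega$.

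The next step is the compactness and passage to the limit, which runs parallel to Steps~3--6 of the proof of Theorem~\ref{td-nsk}. Estimate~(\ref{rtw37}) for $w_n$ transfers verbatim to $u_n$ on $\Omega$ (the added piece is constant, so $\int_\Omega|Du_n|=\int_{\Omega_n}|Dw_n|$), giving a uniform $BV$ bound; extract a subsequence $u_n\to u$ in $L^1$. As before I show $|Du_n|(\Omega)$ is increasing, using that $w_{n+1}$ is continuous across $L_n$ (the datum is constant there, and the argument in Step~2 of Theorem~\ref{td-nsk} applies) and that $u_n$ is the least-gradient solution on $\Omega_n$, so $\int_{\Omega_n}|Du_n|\le|Du|(\Omega_n)$; letting $n\to\infty$ and using that $\Omega_n\uparrow\Omega$ forces strict convergence $u_n\to u$. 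Then the trace identity $Tu=f$ follows locally: away from $p_0$, for large $n$ the boundary and datum of $\Omega_n$ agree with those of $\Omega$, so $Tu=f$ $\cH^1$-a.e.; and $u$ is a least gradient function by Miranda's theorem \cite{miranda} as an $L^1$-limit of least gradient functions. The estimates (\ref{rtw312}) then drop out of (\ref{rtw37}) for $u_n$ together with $\diam\widetilde\Omega_n\to\diam\Omega$ and lower semicontinuity of the total variation.

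The main obstacle I expect is the selection of the cutting segments $L_n$ and the verification that $\tilde f_n$ still satisfies \emph{all} the admissibility conditions on $\widetilde\Omega_n$ --- in particular that cutting between consecutive humps does not create a new bad configuration near the new side $L_n$, and that the companion points $y_n$ can always be chosen off $\ell$ and at continuity points so that OPC and DCC survive. Unlike Theorem~\ref{td-nsk}, where the accumulation point carried a strict extremum, here $p_0$ is the limit of humps, so I must also argue that the humps on $\ell$ outside $B(p_0,\rho/2^n)$ are finite in number and their companion intervals $[a_i,y_i]\cup[b_i,z_i]$ stay away from $L_n$ for large $n$ (this uses (\ref{DA}) and OPC); otherwise Theorem~\ref{Main2} would not apply to $(\widetilde\Omega_n,\tilde f_n)$. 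A secondary point needing care is the case where \emph{two} sides, or infinitely many sides, each carry humps accumulating at $p_0$: as remarked before the statement, one reduces to the single-side case, but one should check the cut $L_n$ can be taken to separate $p_0$ from all offending humps simultaneously. Once these geometric bookkeeping issues are settled, the analytic core is identical to the proof of Theorem~\ref{td-nsk}.
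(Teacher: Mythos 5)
Your overall architecture is the paper's: truncate $\Omega$ by a segment $L_n$ separating $p_0$ from all but finitely many humps, flatten the datum on $L_n$, apply Theorem~\ref{Main2} on the truncated (and convexly completed) domain, and pass to the limit via monotonicity of $|Du_n|(\Omega)$, strict convergence, and Miranda's theorem. But the one step you flag as ``the main obstacle'' --- the selection of the cut --- is exactly where your proposal diverges from the paper and where it breaks. You cut at a continuity point $x_n$ \emph{between} consecutive humps and ask for a companion $y_n\in\d\Omega\setminus\ell$ with $f(x_n)=f(y_n)$, ``exactly as in Theorem~\ref{td-nsk}.'' In Theorem~\ref{td-nsk} that matching point exists because $f$ has a strict extremum at $p_0$ and is monotone on both boundary arcs emanating from it, so level values are achieved on both sides. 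Here there is no such hypothesis, $f$ is discontinuous, and there is no intermediate value theorem: the value $f(x_n)$, which lies strictly between $e_n$ and $e_{n+1}$, may simply be skipped by $f$ on $\d\Omega\setminus\ell$. Also, Remark~\ref{mono} gives only monotonicity, not the strict monotonicity you invoke, between humps.

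The paper avoids this entirely by cutting along $L_n=[a_n,y_n]$, where $a_n$ is the endpoint of the $n$-th hump nearer to $p_0$ and $y_n$ is the companion point supplied by (\ref{ddfyz}); condition D2 (inequality (\ref{DA}) plus the requirement $y_n\notin\ell$) \emph{guarantees} this point exists, lies off $\ell$, and carries the value $e_n$, so the flattened datum $f_n\equiv e_n$ on $L_n$ glues continuously to the hump. This choice also feeds the second place where your argument is underpowered: to show $|Du_n|(\Omega)$ is increasing you need $w_{n+1}|_{\Omega_n}$ to be a competitor for $w_n$, i.e.\ you need its trace on $L_n$ to equal the constant datum there. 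The paper gets this from Corollary~\ref{c39}: the solution $w_{n+1}$ equals $e_n$ on the whole quadrilateral $\conv(a_n,b_n,y_n,z_n)$, of which $L_n=[a_n,y_n]$ is a side. With your cut at an interior monotonicity point there is no analogous statement forcing the level set of $w_{n+1}$ through $x_n$ to coincide with $[x_n,y_n]$, so the monotonicity and the trace identification on $L_n$ are not justified. To repair the proof, replace your choice of $x_n$ by the hump endpoints $a_n$ and their D2-companions $y_n$; the rest of your limiting argument (Steps 3--6 of Theorem~\ref{td-nsk}, the estimates from (\ref{rtw37}), and $\diam\widetilde\Omega_n\to\diam\Omega$) then goes through as you describe.
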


\begin{proof}
{\it Step 1.} We proceed by constructing  a 
sequence of least gradient functions on an increasing sequence of subsets of $\Omega$. The approximating sets $\Omega_n$ are constructed exactly as in \cite[Section 3.3]{RySa}. By Theorem  \ref{Main2}, 
we will infer existence of solutions to the least gradient problems, $u_n$, on $\Omega_n$. They will be used to find a solution to the problem in question.

Let us change slightly the notation and we shall write $[p_0, p_1]$ for $\ell$.
We assume that this side has an infinite number of humps, $\{ \bar I_n\}_{n=1}^\infty$, $\bar I_n =[a_n, b_n]$, 
accumulating at  endpoint  $p_0$. For the sake of definiteness we also assume that $|p_0-a_n| < |p_0 - b_n|$ for all $n\in \bN$.
We choose $y_n$ and $z_n$ as in (\ref{ddfyz}), i.e.
$$
\dist(a_n, \d \Omega\setminus I_n) = |a_n - y_n |,\qquad
\dist(b_n, \d \Omega\setminus I_n) = |b_n - z_n |,\qquad y_n, z_n \notin [p_0, p_1].
$$
We set
$L_n = [a_n, y_n]$ and
$\Omega_n = \Omega \setminus H(L_n, p_0)$, where 
$H(L_n, p_0)$ was defined in Step 1 of the proof of Theorem \ref{td-nsk}. We also set
$T_n = \Omega \setminus \Omega_n$. 

We define the boundary data on $\p\Omega_n$,
 $$
 f_n(x)=\begin{cases} f(x) \qquad &x \in \partial \Omega_n\cap \partial \Omega,\\
 e_n \qquad &x\in L_n.
 \end{cases}
 $$
 Here, $e_n$ is the value of $f$ on the hump $I_n$. Since $p_0$ is the only  possible accumulation point of the sides of $\p \Omega$, we see that $\Omega_n$ has a finite number of sides and $f_n$ has a finite number of humps.

{\it Step 2.} We will check that  $f_n$ satisfies the admissibility condition D1 or D2 on $\d \Omega_n$.  
First, we notice that we did not change the 
hump structure on $\ell\cap \p  \Omega_n \setminus I_n$. 
As a result, D2 holds on $\ell\cap \p  \Omega_n$. 

Moreover, $f_n$ on interval $L_n$ is constant, i.e.  monotone. We have to show that condition D1 is satisfied on $L_n$. Indeed, $f_n$ is constant on $I_n$ and on $L_n$, hence it is  continuous at $a_n$. We have to  look at the behavior of $f$ near $y_n$. If  $f$ is continuous at this point, then D1 holds. 

Now, we have to consider $y_n$ as a discontinuity point of $f$ and $f_n$. We have two possibilities for $y_n$: it is either in the relative interior of a side $\ell'_n$ or it is a corner, i.e. there are sides $\ell'_n, \ell_n''$ such that $y_n \in \ell_n'\cap\ell_n''$. In the first case, it follows from D3 that  there must be an interval $y_n\in J_n\subset \ell_n'$, such that $f$ restricted to $J_n$ is monotone.
If so, then the DCC implies that $f_n$ restricted to $I_n \cup L_n\cup J_n$ must be monotone, thus $f_n$ satisfies D1 on $L_n$.

If $y_n$ is a corner, then due to Corollary \ref{cr1} $f$ is continuous at $y_n$. Thus, condition D1 holds on $L_n$. 

We must check that $f_n$ satisfies D1 or D2 on a side $\ell'\subset \d\Omega_n$ containing $y_n$. We may begin our analysis by assuming that $f$ on 
$\ell'$ satisfies D1. In this case, the DCC implies that $f_n$ on $\ell'\cup L_n\cup I_n$ is monotone, so D1 holds for $f_n$ on $\ell'$.

Now, we look at the case, when D2 was true for $f$ on a  side of $\p\Omega$, $\tilde\ell$ containing $\ell_n'$. Then, the structure of D2 implies existence of an interval $J_n\subset \ell_n'$ containing $y_n$ such that $f|_{J_n}$ is monotone. Now, the DCC on $I_n$ implies that if $f$ attains a local maximum on $I_n$, then $f|_{J_n}\ge e_n$ or if  $f$ attains a local minimum on $I_n$, then $f|_{J_n}\le e_n$. Hence, 
$f_n$ is monotone on $J\cup L_n$ and D2 holds on $\ell_n'$. 

{\it Step 3.} We will construct a sequence of approximating solutions. We have already noticed that $\Omega_n$ and $f_n$ satisfy the assumption of our basic existence result, Theorem \ref{Main2}. Thus, there exists a solution $v_n$ of problem (\ref{lg}) on $\Omega_n$ with datum $f_n$. However, this is not enough information for us. Let us call by $L(\ell'_n)$ the line containing $\ell_n'$ and $L(\ell)$ the line containing $\ell$. The lines $L(\ell'_n)$ and $L(\ell)$ intersect at $\bar p_n$. We define $T_n = \conv(\bar p_n, L_n)$ and we set $\tilde\Omega_n = \Omega_n\cup T_n$. Of course $\tilde\Omega_n $ is convex and it contains $\Omega$. We define $\bar f_n \in BV(\d \tilde\Omega_n )$ by the following formula,
$$
\bar f_n(x) = \left\{
\begin{array}{ll}
f_n(x) & x\in \tilde\Omega_n\cap \Omega_n,\\
e_n & x\in \tilde\Omega_n\setminus \bar\Omega_n .
\end{array}\right.
$$
We claim that each $\bar f_n$ satisfies D1 or D2 on the sides of $\tilde\Omega_n$. Indeed:\\
(1) On $\ell$ functions $f$ and $f_n$ have humps. By definition, any hump is separated from the endpoint of $\ell$ by an interval, where $f$ is monotone. The definition of $\bar f_n$ extends this interval of monotonicity. So,  $\bar f_n$ satisfies D2 on $\ell$.\\
(2) We have already noticed that $f_n$ is monotone on $J_n$, so is $\bar f_n$ on $J \cup [\bar p_n, y_n]$.

{\it Step 4.} 
Since $\tilde\Omega_n$ has a finite number of sides, $\bar f_n$ has a finite number of humps and it satisfies the remaining assumptions of Theorem \ref{Main2}, we may invoke this result. It guarantees existence of a solution $w_n$ to (\ref{lg}) on $\tilde\Omega_n$ with datum $\bar f_n$.

We claim that the trace of $w_{n+1}|_{\Omega_n}$ is equal to $f_n$. In fact, it suffices to check that $T(w_{n+1}|_{\Omega_n}) = e_n$ on $L_n$. Since due to Corollary \ref{c39} $w_{n+1} = e_n$ on $\conv(a_n,b_n, y_n,z_n)$ our claim follows.

{\it Step 5.} 
We set
$$
u_n = w_n|_\Omega.
$$
We notice that $u_n(x) = e_n$ for $x\in T_n$ and $u_n$, $w_n$ are continuous across $L_n$.  The continuity of $u_n$ is established as in the course of proof of Theorem \ref{td-nsk}. Thus, we notice that
\begin{equation}\label{19r1}
    \int_\Omega |Du_n| = \int_{\tilde\Omega_{n}} |Dw_n| = \int_{\Omega_n} |Dw_n|.
\end{equation}
It is easy to see that
$$
\int_\Omega  |Du_{n+1}| = \int_{\Omega_{n+1}} |D w_{n+1}| \ge 
\int_{\Omega_{n}} |Dw_{n+1}| \ge \int_{\Omega_{n}} |Dw_{n}| = \int_{\Omega} |Du_n| ,
$$
where we took into account Step 4.
Thus, the sequence $| D u_n|(\Omega)$ is increasing. The same methods we used in the course of  Theorem \ref{td-nsk} yield
$$
\sup_{n\in \bN} | D u_n|(\Omega) = M< \infty.
$$
Then, we can select a subsequence $\{u_{n_k}\}_{k=1}^\infty$ convergent in $L^1$ to $u$. By the lower semicontinuity of the total variation, we deduce
$$
\varliminf_{k\to\infty} \int_\Omega  |Du_{n}| \ge \int_\Omega  |Du|.
$$
On the other hand, we can compare $\int_{\Omega_n}  |Du_n|$ and $\int_{\Omega_n}  |Du|$. Now, (\ref{19r1}) and the continuity of $u_n$ across $L_n$ imply
$$
\int_\Omega |D u_n| = \int_{\Omega_n}  |Du_{n}| = \int_{\Omega_n}  |Dw_{n}| 
\le \int_{\Omega_n}  |Du| \equiv |Du|(\Omega_n).
$$
Since the sequence of sets $\{\Omega_n\}_{n=1}^\infty$ is increasing and $|Du|$ is a Radon measure, then we see
$$
\lim_{n\to \infty} |Du| (\Omega_{n}) = |Du| (\Omega).
$$
Then, we conclude,
$$
M \ge \lim_{n\to \infty} |Du| (\Omega_{n}) \ge \lim_{n\to \infty} \int_\Omega |Du_{n}| = M.
$$
This means that $u_n$ converges to $u$ in the strict sense. Then we have convergence of traces,
$$
T u_n \to Tu
$$
when $n\to\infty.$ Thus, $Tu = f.$

{\it Step 6.} Since functions $w_n$ were of least gradients, so were their restrictions to $\Omega$, i.e. $u_n$. Hence, by Miranda theorem \cite{miranda},  $u$, the limit of the sequence $u_n$ is a least gradient function. Since by the previous step it has the correct trace, we see
that $u$ is a desired solution.

Finally, due to (\ref{19r1}) and the lower semicontinuity of the variation we have,
$$
|Du|(\Omega) \le \liminf_{n\to\infty} |Du_n|(\Omega)
\le \liminf_{n\to\infty} |Dw_n|(\tilde\Omega_n)\le TV(\tilde f_n)\,\diam \tilde\Omega_n,
$$
where the last inequality follows from Theorem \ref{Main2}. Since 
$\lim_{n\to\infty} \diam(\tilde\Omega_n) = \diam \Omega$ and $TV(\tilde f_n) = TV(f)$, we deduce that
$$
\int_\Omega |Du| \le \diam \Omega \, TV(f).
$$
It is easy to see that our construction also  yields the first part of (\ref{rtw312}).
\end{proof}

\section*{Acknowledgement} The work of the authors was in part
supported by the Research Grant 2015/19/P/ST1/02618 financed by the
National Science Centre, Poland, entitled: Variational Problems in
Optical Engineering and Free Material Design.

PR was in part supported by  the Research Grant no 2017/26/M/ST1/00700 
financed by the National Science Centre, Poland. 

The work of AS was in part performed at the University of Warsaw.



\vspace{-.2cm}
\hspace{3cm}
\begin{wrapfigure}{l}{0.15\textwidth}
\includegraphics[width=2.5 cm, height= 2cm]{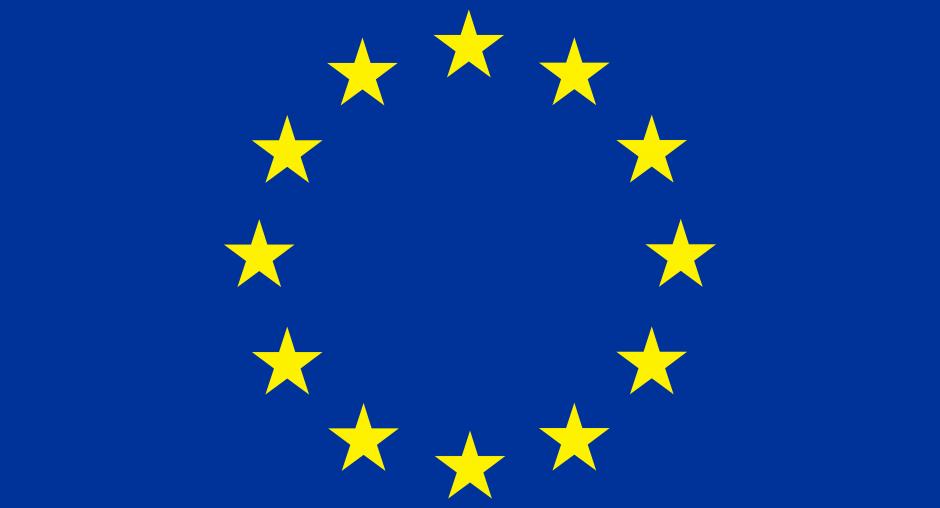}  
\end{wrapfigure}\\
\\{\footnotesize This project has received funding from the European Union's Horizon 2020 research and innovation program under the Marie Sk\l{}odowska-Curie grant agreement No 665778.}
\newline

 \hspace{-5cm}
 
  \hspace{5cm}

\end{document}